\newcommand{\bv}{{\mathbf{v}}}
\newcommand{\Ps}{{\mathbf{P}}}
\newcommand{\Z}{{\mathbf{Z}}}
\newcommand{\C}{{\mathbf{C}}}
\newcommand{\Q}{{\mathbf{Q}}}
\newcommand{\fr}[1]{\{#1\}}
\newcommand{\frd}[1]{\left\{#1\right\}}
\newcommand{\bk}{\mathbf{k}}
\newcommand{\ve}{\mathbf{e}}
\renewcommand{\phi}{\varphi}
    \newtheorem{lemma}{Lemma}[section]
    \newtheorem{proposition}[lemma]{Proposition}
    \newtheorem{theorem}[lemma]{Theorem}
    \newtheorem{corollary}[lemma]{Corollary}
   \theoremstyle{definition}
    \newtheorem{definition}[lemma]{Definition}
    \newtheorem{example}[lemma]{Example}
    \newtheorem{remark}[lemma]{Remark}
    \DeclareMathOperator{\rank}{rank}
    \DeclareMathOperator{\Pic}{{Pic}}
\DeclareMathOperator{\prim}{{prim}}
\DeclareMathOperator{\spa}{{span}}
\DeclareMathOperator{\ord}{{ord}}
\begin{document}
\title{A structure theorem for  fibrations on Delsarte surfaces}
\author{Bas Heijne}
\address{Instut f\"ur algebraische Geometrie, Leibniz Universit\"at Hannover, Welfengarten 1, D-30167 Hannover, Germany}
\email{heijne@math.uni-hannover.de}
\author{Remke Kloosterman}
\address{Institut f\"ur Mathematik, Humboldt-Universit\"at zu Berlin, Unter den Linden 6, D-10099 Berlin, Germany}
\email{klooster@math.hu-berlin.de}
\thanks{This paper is inspired by the results of  \cite[Section 6]{HeijnePhD} and by some discussions which took place on the occasion of the PhD defense of the first author at the University of Groningen. 
Part of the research was done while the first author held a position at the University of Groningen. His position was supported by a grant of the Netherlands Organization for Scientific Research (NWO).
The research is partly supported by ERC Starting grant 279723 (SURFARI).
The second author acknowledges the hospitality of the University of Groningen and the Leibniz Universit\"at Hannover, where most of the work was done.}

\begin{abstract}In this paper we study a special class of fibrations on Delsarte surfaces. 
We call these fibrations Delsarte fibrations. 
We show that after a specific cyclic base change 
 the fibration is the pull back of a fibration with three singular fibers, and that this second base change is completely ramified at  two points where the fiber is singular.

As a corollary we show that every Delsarte fibration of genus 1 with nonconstant $j$-invariant occurs as the base change of an elliptic surface from Fastenberg's list of rational elliptic surfaces with $\gamma<1$.
\end{abstract}
\subjclass{}
\keywords{}
\date{\today}
\maketitle
\section{Introduction}
 A Delsarte surface $S$ is a surface of $\Ps^3$ defined by the vanishing  of a polynomial $F$ consisting of four monomials. Let $A$ be the exponent matrix of $F$, then a Delsarte surface is the quotient of a Fermat surface if and only if $\det(A)\neq 0$. Shioda used this observation in \cite{ShiodaPic} to present an algorithm to determine the Lefschetz number of any smooth surface that is birationally equivalent with $S$.

Fix now two disjoint lines $\ell_1,\ell_2$ in $\Ps^3$. The projection with center $\ell_1$ onto $\ell_2$ yields a rational map $S\dashrightarrow \Ps^1$. Resolving the indeterminancies of this map yields a fibration $\tilde{S} \to \Ps^1$.
If the genus of the general fiber is one and this morphism has a section then Shioda's algorithm together with the Shioda-Tate formula allows one to determine the Mordell-Weil rank of the group of sections. Shioda applied this to the surface
\[ y^2+x^3+1+t^n\]
and showed in \cite{ShiodaAstr} that the maximal Mordell-Weil rank (by varying $n$) is 68.

If both lines $\ell_i$ are intersections of two coordinate hyperplane then we call the obtained fibration a \emph{Delsarte fibration}. We will introduce the notion of a \emph{Delsarte base change}. Roughly said, this is a base change $\Ps^1\to\Ps^1$ completely ramified over $0$ and $\infty$. In particular, the pullback of a Delsarte fibration under a Delsarte base change is again a  Delsarte fibration.
The first author determined in his PhD thesis \cite{HeijnePhD} the maximal Mordell-Weil rank under Delsarte base changes of any Delsarte fibration such that the general fiber has genus one. In this way he showed that Shioda's example has the highest possible rank among Delsarte fibration of genus one.

In \cite{FastExtremal} and \cite{FastLongTable}  Fastenberg calculated the maximal Mordell-Weil rank under base changes $t\mapsto t^n$ for a special class of elliptic surface, i.e., elliptic curves over $\C(t)$ with nonconstant $j$-invariant such that  a certain invariant $\gamma$ is smaller than $1$. It turned out that all the ranks that occur for Delsarte surfaces with nonconstant $j$-invariant also occur in Fastenberg's list. In \cite[Ch. 6]{HeijnePhD} it is shown that for every Delsarte fibration of genus one, there exist integers $m,n$ such that the Delsarte base change of degree $m$ of the Delsarte fibration is isomorphic to a base change of the form $t\mapsto t^n$ of one of the surfaces in Fastenberg's list.

In this paper we present a more conceptual proof for this phenomenon: First we study the configuration of singular fibers of a Delsarte fibration. We show that for any Delsarte fibration each two singular fibers over points $t\neq0,\infty$ are isomorphic. Then we show that after a base change of the form $t\mapsto t^n$ the Delsarte fibration is a base change of the form $t\mapsto t^m$ of a fibration with at most one singular fiber away from $0,\infty$.

If there is no singular fiber away from $0,\infty$, then the fibration becomes split after a base change of $t\mapsto t^m$. If there is at least one singular fiber then we show that there are three possibilities, namely the function field extension $K(S)/K(\Ps^1)=K(x,y,t)/K(t)$  is given by $m_1+m_2+(1+t)m_3$, where the $m_i$ are monomials in $x$ and $y$, or this extension is given by $y^a=x^b+x^c+tx^d$, where $b,c,d$ are mutually distinct, or the singular fiber away from $0$ and $\infty$ has only nodes as singularities and is therefore semistable. See Proposition~\ref{propStandardForm}.

In the case of a genus one fibration we can use this classification to check almost immediately that any Delsarte fibration of genus one admits a base change of the form $t\mapsto t^n$ such that the pulled back fibration is the pull back of a fibration with $\gamma<1$ or has a constant $j$-invariant. See Corollary~\ref{corGamma}. This procedure is carried out in Section~\ref{secDes}. 

 The techniques used in the papers by Fastenberg use the fact that the fibration is not isotrivial and it seems very hard to extend these techniques to isotrivial fibrations.
In Section~\ref{secIsoTrivial} we consider an example of a class of isotrivial Delsarte fibrations. Shioda's algorithm yields the Lefschetz number of any Delsarte surface with $\det(A)\neq 0$. Hence it is interesting to see how it works in the case where Fastenberg's method breaks down. Let $p$ be an odd prime number, and $a$ a positive integer. We consider the family of surfaces
\[ S:y^2=x^p+t^{2ap}+s^{2ap}\]
in $\Ps(2a,ap,1,1)$. Then $S$ is birational to a Delsarte surface. After blowing up $(1:1:0:0)$ we obtain a smooth surface $\tilde{S}$ together with a morphism $\tilde{S}\to \Ps^1$. The general fiber of this morphism is a hyperelliptc curve of genus $(p-1)/2$. We show that if $p>7$ then $\rho(\tilde{S})=2+6(p-1)$, in particular the Picard number  is independent of $a$. Two of the generators of the N\'eron-Severi group of $S$ can be easily explained: the first one is the pullback of the hyperplane class on $S$, the second class is the exceptional divisor of the morphism $\tilde{S}\to S$. In Example~\ref{exaIso} we give also equations for some other classes.

If we take $p=3$ then we find back Shioda's original example. However, in Shioda's example one has that $\rho(\tilde{S})$ is not completely independent of $a$, it depends namely on $\gcd(a,60)$. Similarly one can show that if $p=5$ and $p=7$  then $\rho(\tilde{S})$ depends on $a$. Our result shows that these cases are exceptions, i.e., for $p>7$ we have that $\rho(\tilde{S})$ is completely independent of $a$.

\section{Delsarte surfaces}\label{secDes}
In this section we work over an algebraically closed field $K$ of characteristic zero.

\begin{definition} \label{defBasic} A surface $S\subset \Ps^3$ is called a \emph{Delsarte surface} if $S$ is the zero-set of a polynomial of the form
\[F:=\sum_{i=0}^3 c_i \prod_{j=0}^3 X_i^{a_{i,j}},\]
with $c_i\in K^*$ and $a_{i,j}\in \Z_{\geq0}$. The $4\times 4$ matrix $A:=(a_{i,j})$ is called the \emph{exponent matrix} of $S$. 

A \emph{Delsarte fibration of genus $g$}  on a Delsarte surface $S$ consists of the choice of two disjoint lines $\ell_1,\ell_2$ such that both the $\ell_i$ are the intersection of two coordinate hyperplanes and the generic fiber of the projection $S\dashrightarrow \ell_2$ with center $\ell_1$ is an irreducible curve of geometric genus $g$.

A \emph{Delsarte birational map} is a birational map $\varphi:\Ps^3\dashrightarrow \Ps^3$ such that $\varphi(X_0:\dots:X_3)= (\prod X_j^{b_{0j}}:\dots: \prod X_j^{b_{3j}})$, i.e., $\varphi$ is a birational monomial map.
\end{definition}

\begin{remark} \label{rmkCoeff} Since $K$ is algebraically closed, we can multiply each of the four coordinates  $X_i$ by a nonzero constant such that all four constants in $F$ coincide, hence without loss of generality we may assume that $c_i=1$.

After permuting the coordinates, if necessary, we may assume that $\ell_1$ equals $V(X_2,X_3)$ and $\ell_2$ equals $V(X_0,X_1)$. Then the projection map $S\dashrightarrow \ell_2$ is just the map $[X_0:X_1:X_2:X_3]\to [X_2:X_3]$. Let $f(x,y,t):=F(x,y,t,1)\in K(t)[x,y]$.
Then the function field extension $K(S)/K(\ell_2)$ is isomorphic to the function field extension $K(x,y,t)/f$ over $K(t)$.

We call a Delsarte fibration with $\ell_1=V(X_2,X_3)$ and $\ell_2=V(X_0,X_1)$ the \emph{standard fibration} on $S$.
\end{remark}

\begin{definition}\label{defBaseChange}
Let $n$ be a nonzero integer. A \emph{Delsarte base change of degree $|n|$} of a Delsarte fibration $\varphi :S\dashrightarrow \Ps^1$ is a Delsarte surface $S_n$, together with a Delsarte fibration $\varphi_n: S_n\dashrightarrow \Ps^1$ and a Delsarte rational map $S_n\dashrightarrow S$ of degree $n$, such that there exists a commutative diagram
\[
\xymatrix{S_n\ar@{-->}[r] \ar@{-->}[d] &S\ar@{-->}[d] \\\
 \Ps^1 \ar[r]&\Ps^1}
\]
and $K(S_n)/K(\Ps^1)$ is isomorphic to the function field extension $K(x,y,s)/(f(x,y,s^n)$ over $K(s)$.
\end{definition}

\begin{remark}
Note that $n$ is allowed to be negative. If $n$ is negative then a base change of degree $-n$ is the composition of the automorphism $t\mapsto 1/t$ of $\Ps^1$ with the usual degree $-n$ base change $t\mapsto t^{-n}$. In many cases we  compose a base change with a Delsarte birational map which respects the standard fibration. In affine coordinates such a map is given by $(x,y,s)\mapsto (xs^a,ys^b,s^n)$ for some integers $a,b$.
\end{remark}

\begin{lemma}\label{lemRatFib} Let $S$ be a Delsarte surface. Suppose there is a nonzero vector $\bv=(a,b,0,0)^T$ in $\Z^4$ such that $A\bv\in \spa(1,1,1,1)^T$. Then the generic fiber of the standard fibration $\varphi:S\to \Ps^1$ is a rational curve. 
\end{lemma}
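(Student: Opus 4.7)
The plan is to use the hypothesis to perform a monomial change of coordinates on the torus of the $(X_0,X_1)$-plane that separates variables in the equation of the generic fiber.

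Dividing $\bv$ by $\gcd(a,b)$, I may assume $\gcd(a,b)=1$. Let $k$ be the integer with $A\bv=k(1,1,1,1)^T$, i.e., $a\,a_{i,0}+b\,a_{i,1}=k$ for all $i$. By B\'ezout, pick integers $\alpha,\gamma$ with $\alpha b-a\gamma=1$. Consider the monomial automorphism of the torus $\G_m^2$ (in the $(X_0,X_1)$-affine piece of the generic fiber given by $X_3=1$, $X_2=t$) defined by $X_0=U^{\alpha}V^{a}$, $X_1=U^{\gamma}V^{b}$. Since the matrix $\bigl(\begin{smallmatrix}\alpha & a\\ \gamma & b\end{smallmatrix}\bigr)$ has determinant $1$, this is indeed a birational isomorphism of $\G_m^2$.

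Under this substitution, each monomial transforms as
\[
X_0^{a_{i,0}}X_1^{a_{i,1}} = U^{\alpha a_{i,0}+\gamma a_{i,1}}V^{a\,a_{i,0}+b\,a_{i,1}} = U^{e_i}V^{k},
\]
with $e_i:=\alpha a_{i,0}+\gamma a_{i,1}$; the exponent of $V$ is constant in $i$ precisely because of the hypothesis. Hence $f(X_0,X_1,t):=F(X_0,X_1,t,1)$ pulls back to $V^{k}\,g(U,t)$, where $g(U,t):=\sum_{i=0}^{3}U^{e_i}t^{a_{i,2}}$ is a Laurent polynomial in $U$ whose coefficients lie in $K[t]$.

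On the torus $UV\neq 0$, the vanishing of $V^kg(U,t)$ is equivalent to the vanishing of $g(U,t)$, an equation that no longer involves $V$. For generic $t\in\Ps^1$ this defines a finite subscheme of the $U$-line, so its preimage in the $(U,V)$-torus is a finite disjoint union of copies of $\G_m$ (one for each root of $g$), each of which is rational. Transporting back through the birational map $\phi$, the intersection of the generic fiber with the torus $X_0X_1\neq 0$ is a finite disjoint union of rational curves.

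It remains to observe that any component of the generic fiber contained in $\{X_0X_1=0\}$ is forced to be a line $\{X_0=0\}$ or $\{X_1=0\}$ in $\A^2_{K(t)}$ (this happens precisely when $X_0$ or $X_1$ divides $F$), and is therefore also rational. I do not foresee any genuine obstacle; the only point requiring mild care is this last step, i.e., checking that the non-toric locus contributes at most linear components, because the monomial substitution $\phi$ is only defined on the open torus and not on the full affine plane.
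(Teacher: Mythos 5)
Your proof is correct and follows essentially the same route as the paper's: a monomial substitution in the fiber variables dictated by $\bv$ turns the defining polynomial into a monomial times a polynomial from which one variable has been eliminated, exhibiting the fiber as a union of rational curves. The only cosmetic difference is that you take a unimodular (hence birational) substitution on the torus, whereas the paper uses $(x,y)\mapsto(x^a,x^by)$, a degree-$|a|$ cover, and then invokes the fact that an irreducible curve dominated by a rational curve is rational.
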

\begin{proof}
 After interchanging $x$ and $y$, if necessary, we may assume that $a$ is nonzero. Consider now $f_0:=f(x^a,x^by,t)$. The exponents of $x$ in the four monomials of $f_0$ are precisely the entries of $A\bv$. Since $A\bv=e(1,1,1,1)^T$ for some integer $e$ we have that $f_0=x^eg(y,t)$. This implies that the generic fiber of $\varphi$ is dominated by a finite union of rational curves. Since the generic fiber is irreducible it follows that the generic fiber of $\varphi$ is a rational curve.
\end{proof}

\begin{lemma} \label{lemProd} Let $S$ be a Delsarte surface. Suppose there is a nonzero vector $\bv=(a,b,c,0)^T$ in $\Z^4$ such that $c\neq 0$ and $A\bv\in \spa(1,1,1,1)^T$. Then there is a Delsarte base change of degree $|c|$ such that the pull back of the standard fibration on $S$ is birational to a product $C\times \Ps^1\to\Ps^1$.
\end{lemma}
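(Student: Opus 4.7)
The plan is to extend the substitution trick from Lemma~\ref{lemRatFib} by following a Delsarte base change in $t$ with a Delsarte birational map that respects the standard fibration, as described in the remark after Definition~\ref{defBaseChange}. Replacing $\bv$ by $-\bv$ if necessary (which keeps $A\bv\in\spa(1,1,1,1)^T$ and only flips the sign of the third coordinate), I may assume $c>0$.

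The heart of the argument is a direct monomial calculation. First I would apply the degree-$c$ Delsarte base change $t=s^c$, producing a surface whose function field over $K(s)$ is $K(x,y,s)/(f(x,y,s^c))$. Then I would compose with the monomial substitution $(x,y,s)\mapsto(xs^a,ys^b,s)$. Under this substitution the $i$-th monomial of $f$ transforms into
\[
(xs^a)^{a_{i,0}}(ys^b)^{a_{i,1}}(s^c)^{a_{i,2}}=x^{a_{i,0}}y^{a_{i,1}}s^{a\,a_{i,0}+b\,a_{i,1}+c\,a_{i,2}},
\]
so the exponent of $s$ in the $i$-th monomial is exactly the $i$-th entry of $A\bv$. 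By hypothesis this quantity is a single integer $e$ independent of $i$, so the transformed defining polynomial is $s^e g(x,y)$ where $g(x,y):=\sum_i x^{a_{i,0}}y^{a_{i,1}}=f(x,y,1)$. Since $s$ is generically invertible, the defining equation reduces to $g(x,y)=0$, an equation involving no $s$.

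Thus the pulled-back function field is $K(C)(s)$, where $C$ is the plane curve cut out by $g(x,y)$, and this exhibits the fibration as birational to $C\times\Ps^1\to\Ps^1$. The small items to verify along the way are routine: the monomial substitution has Jacobian determinant $s^{a+b}\neq 0$, so it is a legitimate Delsarte birational map; composing with it does not alter the map on the base $\Ps^1$, so the total map is still a Delsarte base change of degree $|c|$ in the sense of Definition~\ref{defBaseChange}; and $g(x,y)$ is not identically zero because all four coefficients of $f$ may be normalized to $1$ by Remark~\ref{rmkCoeff}, so the summation cannot cancel. No step presents a serious obstacle; the proof is essentially the one-variable generalization of Lemma~\ref{lemRatFib}, and the only thing one must keep in mind is to allow the third coordinate of $\bv$ to take negative values by first permitting $t\mapsto 1/t$.
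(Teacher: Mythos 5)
Your proof is correct and follows essentially the same route as the paper: the paper also substitutes $(x,y,t)\mapsto(xt^a,yt^b,t^c)$, observes that the $t$-exponents of the four monomials are the entries of $A\bv=e(1,1,1,1)^T$, and concludes that the defining polynomial factors as $t^e g(x,y)$, exhibiting the pulled-back surface as birational to $C\times\Ps^1$ (the paper phrases this via the cone over $g=0$ rather than via the function field $K(C)(s)$, but this is only a cosmetic difference). Your extra remarks about reducing to $c>0$, the Jacobian of the monomial map, and the nonvanishing of $g$ are fine and consistent with the paper's conventions.
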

\begin{proof}
Consider now $f_0:=f(xt^a,yt^b,t^c)$. The exponents of $t$ in the four monomials of $f_0$ are precisely the entries of $A\bv$.
Since $A\bv=e(1,1,1,1)^T$ for some integer $e$ we have that $f_0=t^eg(x,y)$. Let $S'$ be the projective closure of $g=0$ in $\Ps^3$. Then $S'$ is a cone over the plane curve $g=0$, in particular $S'$ is birational to $C\times \Ps^1$ and the standard fibration on $S'$ is birational to the projection $C\times \Ps^1\to \Ps^1$.
Now $S'$ is birational to the surface $S_c$, the projective closure of $f(x,y,t^c)=0$. Hence $S_c \to \Ps^1$ is birational to $C\times \Ps^1\to \Ps^1$.
\end{proof}

\begin{lemma}\label{lemDetA} Let $A$ be the exponent matrix of a Delsarte surface. There exists a nonzero vector $\bv=(a,b,c,0)^T$ in $\Z^4$ such that $A\bv\in \spa(1,1,1,1)^T$ if and only if $\det(A)=0$.
\end{lemma}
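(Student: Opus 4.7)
The plan rests on a single observation about the exponent matrix: because $F$ is homogeneous of some positive degree $d$, every row of $A$ sums to $d$, so
\[ A\cdot(1,1,1,1)^T = d(1,1,1,1)^T. \]
Since $d>0$, this shows in particular that $(1,1,1,1)^T$ is an eigenvector of $A$ with nonzero eigenvalue, hence is not in $\ker(A)$. With this in hand, both implications become short linear algebra, and essentially reduce to an explicit bijection between $\ker(A)$ and the set of $\bv=(a,b,c,0)^T$ with $A\bv\in\spa((1,1,1,1)^T)$, modulo the line $\spa((1,1,1,1)^T)$.

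For the ``if'' direction, I would assume $\det(A)=0$ and pick a nonzero integer vector $\bw=(x_0,x_1,x_2,x_3)^T$ in $\ker(A)$ (after clearing denominators). Setting
\[ \bv := \bw - x_3(1,1,1,1)^T = (x_0-x_3,\,x_1-x_3,\,x_2-x_3,\,0)^T, \]
the last coordinate of $\bv$ vanishes and $A\bv = -x_3d(1,1,1,1)^T\in\spa((1,1,1,1)^T)$. One then checks $\bv\neq 0$: if it did vanish, then $\bw=x_3(1,1,1,1)^T$, and applying $A$ would force $x_3d=0$, contradicting $\bw\neq 0$.

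For the ``only if'' direction, I would start from a nonzero $\bv=(a,b,c,0)^T$ with $A\bv=\mu(1,1,1,1)^T$ for some $\mu\in\Q$, and set $\bw:=\bv-(\mu/d)(1,1,1,1)^T$. This has $A\bw=0$, and if $\bw$ vanished then $\bv=(\mu/d)(1,1,1,1)^T$; the zero last coordinate would force $\mu=0$ and hence $\bv=0$, a contradiction. So $\ker(A)$ is nontrivial and $\det(A)=0$.

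No step looks like a genuine obstacle; the only thing to be careful about is the appeal to $d\neq 0$, which is automatic from the fact that $F$ defines a surface in $\Ps^3$ and hence is a nonconstant homogeneous polynomial.
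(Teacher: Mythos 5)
Your proof is correct and follows essentially the same route as the paper: both arguments hinge on the row-sum identity $A(1,1,1,1)^T=d(1,1,1,1)^T$ with $d>0$, and your construction $\bv=\bw-x_3(1,1,1,1)^T$ from a kernel vector $\bw$ is (up to sign) exactly the paper's $\bv=(a_4-a_1,a_4-a_2,a_4-a_3,0)^T$. The only cosmetic difference is in the other direction, where you exhibit a kernel element directly while the paper argues the contrapositive via $A^{-1}(1,1,1,1)^T\in\spa(1,1,1,1)^T$; both are the same one-line observation.
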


\begin{proof}
Since each row sum of $A$ equals $d$, the degree of the $i$-th monomial in $F$, it follows that  $A(1,1,1,1)^T =d(1,1,1,1)^T$. Suppose first that $\det(A)\neq 0$.  Then from $A(1,1,1,1)^T =d(1,1,1,1)^T$  it follows that $A^{-1}(1,1,1,1)^T\in\spa (1,1,1,1)^T$, which does not contain a nonzero vector with vanishing fourth coordinate.

Suppose now that $\det(A)=0$. Denote with $A_i$ the $i$-th column of $A$. From the fact that each row sum of $A$ equals $d$ we get that $A_1+A_2+A_3+A_4=d(1,1,1,1)^T$.
Since $\det(A)=0$ there exists a nonzero vector $(a_1,a_2,a_3,a_4)$ such that $\sum a_iA_i=0$.  From this we obtain
\[ (a_4-a_1)A_1+(a_4-a_2)A_2+(a_4-a_3)A_3=a_4(A_1+A_2+A_3+A_4)= a_4d(1,1,1,1)^T.\]
I.e., $\bv=(a_4-a_1,a_4-a_2,a_4-a_3,0)^T$ is a vector such that $A\bv\in \spa (1,1,1,1)^T$. We need to show that $\bv$ is nonzero. Suppose the contrary, then also $A\bv=a_4d(1,1,1,1)^T$ is zero and therefore $a_4=0$. Substituting this in $\bv$ yields that  $\bv=(-a_1,-a_2,-a_3,0)=(0,0,0,0)$ holds, which contradicts our assumption that $(a_1,a_2,a_3,a_4)$ is nonzero.
\end{proof}

\begin{remark} \label{rmkDet} 
We want to continue to investigate the singular fibers of a Delsarte fibration, in particular the singular fibers over points $t=t_0$ with $t_0\neq 0,\infty$. 
If $\det(A)=0$ then either the generic fiber has geometric genus 0 or after a Delsarte base change the fibration is split, i.e., the fibration is birational to a product. In the latter case all the fibers away from $0$ and $\infty$ are smooth. Hence from now on we restrict to the case where $\det(A)\neq 0$.
\end{remark}

\begin{lemma} \label{lemOneT} Let $S$ be a Delsarte surface with $\det(A)\neq0$, such that the generic fiber has positive geometric genus.  Let $\varphi: S \to \Ps^1$ be the standard Delsarte fibration. Then there exists a Delsarte base change of $\varphi$ that is birational to the standard fibration on a Delsarte surface $S'$ with affine equation of the form $m_1+m_2+m_3+t^nm_4$, where each $m_i$ is a monomial in $x$ and $y$.
\end{lemma}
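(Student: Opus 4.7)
The plan is to reduce the statement to a linear-algebra problem on the exponent matrix $A$. I will seek a Delsarte base change of degree $|m|$ (namely $t\mapsto s^m$), composed with a Delsarte birational map of the form $(x,y,s)\mapsto (xs^a, ys^b, s)$; their composition sends $f(x,y,t)$ to $f(xs^a, ys^b, s^m)$. The $s$-exponent of the $i$-th monomial of this polynomial is exactly the $i$-th entry of $A\bv$, where $\bv := (a,b,m,0)^T$, and factoring out an overall $s^e$ shifts each entry by $-e$. Thus the task is to produce a nonzero integer vector $\bv$ with $\bv_4=0$ and $m=\bv_3\ne 0$ such that $A\bv - e\mathbf{1}$ has three zero entries and one nonzero entry, for $\mathbf{1}:=(1,1,1,1)^T$.

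The construction exploits the identity $A\mathbf{1} = d\mathbf{1}$ (where $d$ is the degree of $F$), so $A^{-1}\mathbf{1} = (1/d)\mathbf{1}$. For each $i\in\{1,2,3,4\}$ I would consider the rational vector
\[\bv^{(i)} := A^{-1}e_i - (A^{-1})_{4,i}\,\mathbf{1}.\]
A direct computation gives $\bv^{(i)}_4=0$ and $A\bv^{(i)} = e_i - d(A^{-1})_{4,i}\,\mathbf{1}$, so three coordinates of $A\bv^{(i)}$ agree while the $i$-th differs by $1$. The third coordinate of $\bv^{(i)}$ is $(A^{-1})_{3,i}-(A^{-1})_{4,i}$, and the whole argument reduces to producing one $i$ for which this quantity is nonzero.

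The key point, and really the only thing that could obstruct the argument, is whether the third and fourth rows of $A^{-1}$ might coincide across every column. This is ruled out by invertibility: since $\det(A)\ne 0$, the rows of $A^{-1}$ are linearly independent, hence in particular distinct as vectors, so some column $i$ witnesses $(A^{-1})_{3,i}\ne (A^{-1})_{4,i}$. Selecting such an $i$ and clearing denominators produces the required integer vector $\bv=(a,b,m,0)$ with $m\ne 0$. The final nondegeneracy $n\ne 0$ in the target form is automatic: if the fourth entry of $A\bv-e\mathbf{1}$ also vanished, then $A\bv$ would lie in $\spa(\mathbf{1})$, which by Lemma~\ref{lemDetA} combined with $\det(A)\ne 0$ would force $\bv=0$, contradicting the construction. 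Thus the Delsarte base change of degree $|m|$ composed with the monomial birational map $(x,y,s)\mapsto (xs^a, ys^b, s)$ produces a Delsarte surface $S'$ with affine equation of the required shape $m_1+m_2+m_3+s^n m_4$; that the three surviving monomials $m_1, m_2, m_3$ remain pairwise distinct is a routine check using $m\ne 0$ and distinctness of the rows of $A$.
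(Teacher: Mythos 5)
Your proof is correct and follows essentially the same route as the paper: both construct, for each $i$, the vector in $A^{-1}\spa\{\ve_i,(1,1,1,1)^T\}$ with vanishing fourth coordinate (your explicit formula $A^{-1}e_i-(A^{-1})_{4,i}\mathbf{1}$ is exactly the paper's $\bv_i$), using $A\mathbf{1}=d\mathbf{1}$. The only cosmetic difference is how you guarantee a nonzero third coordinate — you invoke distinctness of the third and fourth rows of $A^{-1}$, while the paper notes that $\bv_1,\bv_2,\bv_3$ span all of $\spa\{\ve_1,\ve_2,\ve_3\}$ — and both arguments are valid.
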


\begin{proof}
Let $\ve_0=(1,1,1,1)^T$ and $\ve_i$ be the $i$-th standard basis vector of $\Q^4$.
Let $V_i$ be the vector space spanned by $\ve_0$ and $\ve_i$. Since $A^{-1}\ve_0=\frac{1}{d}\ve_0$ it follows that  $A^{-1}V_i$ is not contained in $\spa \{\ve_1,\ve_2,\ve_3\}$.
In particular, $\dim A^{-1}V_i\cap \spa \{\ve_1,\ve_2,\ve_3\}=1$. 

Let $\ell_i$ be the line $A^{-1}V_i\cap \spa \{\ve_1,\ve_2,\ve_3\}$ and let $\bv_i$ be a vector spanning $\ell_i$.
We can scale $\bv_i$ such that $A\bv_i=\ve_i+t_i\ve_0$  for some $t_i \in K$. Since $\ve_0,\ve_1,\ve_2,\ve_3$ are linearly independent it follows that $\{\ve_i+t_i\ve_0\}_{i=1}^3$ are linearly independent and therefore $\bv_1,\bv_2,\bv_3$ are linearly independent. Hence $\spa \{\bv_1,\bv_2,\bv_3\}$ is three-dimensional and there is at least one $\bv_i=(a_i,b_i,c_i,0)$  with $c_i\neq0$. Then the rational map defined by $(x,y,t)\mapsto (xt^{a_i},yt^{b_t},t^{c_i})$ is a composition of a Delsarte base change and a Delsarte rational map.
 
Now three of the four entries of $A\bv_i$ coincide, say they equal $e$. The exponent of $t$ in  the four monomials of  $f_0:=f(xt^{a_i},yt^{b_i},t^{c_i})$ are the entries of $A\bv_i$. In particular, in precisely three of the four monomials the exponents of $t$ equal the same constant $e$. Therefore $g:=f_0/t^e$ consists of four monomials of which precisely one contains a $t$. If the exponent of $t$ in this monomial is negative then we replace $t$ by $1/t$ in $g$. Then $g=0$ is an affine polynomial equation for the surface $S'$.
\end{proof}

Recall that we investigate the singular fibers of a Delsarte fibration, in particular the singular fibers over points $t=t_0$ with $t_0\neq 0,\infty$. 
If we have a Delsarte fibration and take a Delsarte base change then the type of singular fiber over $t=0,\infty$ may change, since the base change map is ramified over these points. Over points with $t\neq 0,\infty$ the base change map is unramified and therefore the type of singular fibers remains the same. 
Hence to describe the possible types of singular fibers over points with $t\neq0,\infty$ it suffices by Lemma~\ref{lemOneT} to study Delsarte surfaces such that only one monomial contains a $t$, i.e., 
 we may restrict ourselves to Delsarte surfaces with affine equation $m_1+m_2+m_3+t^nm_4$. If $n=0$ then the fibration is split and there are no singular fibers. If $n\neq 0$ then the possible types of singular fibers are already determined at $n=1$, i.e., it suffices to consider Delsarte surfaces with affine equation $m_1+m_2+m_3+tm_4$. 
 
 \begin{definition}\label{defMinFib}
 We call the standard fibration on a Delsarte surface a \emph{minimal Delsarte fibration} if the following conditions hold:
 \begin{enumerate}
 \item The affine equation for the standard fibration is of the form $m_1+m_2+m_3+tm_4$, where the $m_i$ are monomials in $x$ and $y$.
 \item The exponent matrix $A$ of the corresponding surface $S\subset \Ps^3$ satisfies $\det(A)\neq 0$.
 \end{enumerate}
 \end{definition}
 
 \begin{remark}\label{remMinFib} In the function field $K(S)=K(x,y,t)/f$ we have the relation $t=(-m_1-m_2-m_3)/m_4$. In particular $K(S)=K(x,y)$ and therefore $S$ is a rational surface.
 
 Consider now the defining polynomial for $S$, i.e., $M_1+M_2+M_3+X_2M_4$, where the $M_i$ are monomials in $X_0,X_1,X_3$, the degrees of $M_1, M_2$ and $M_3$ are the same, say $d$ and the degree of $M_4$ equals $d-1$.
 
 The Delsarte fibration is induced by the map $(X_0:X_1:X_2:X_3)\mapsto (X_2:X_3)$. 
 If $S$ contains the line $\ell_1:X_2=X_3=0$ then this rational map can be extended to a morphism on all of $S$, otherwise we blow-up the intersection of this line with $S$ and obtain a morphism $\tilde{S}\to\Ps^1$, such that each fiber is  a plane curve of degree $d$.
 
 There is a different way to obtain this family of plane curves. Define $N_i'$ as follows:
\[ N_i':=M_i(X_0,X_1,X_2,X_2) \mbox{ for } i=1,2,3 \mbox{ and } N_4'=X_2M_4(X_0,X_1,X_2,X_2)\]
 Now the four $N_i'$ have a nontrivial greatest common divisor if and only if $X_3\mid M_i$ for $i=1,2,3$. The later condition is  equivalent to the condition that the line $\ell_1$ is contained in $S$. Moreover, if the greatest common divisor is nontrivial then it equals $X_2$.
 Now set $N_i=N'_i$ if $\ell_1\not \subset S$ and set $N_i=N'_i/X_3$ if $\ell_1\subset S$.
 Then $\lambda(N_1+N_2+N_3)+\mu N_4$ is a pencil of plane curves of degree $d$ or  $d-1$ and the generic member of this pencil is precisely the generic fiber of the standard fibration on $S$.
 
 We can consider the generic member of this family as a projective curve $C$ over $K(t)$ with defining polynomial $G:=N_1+N_2+N_3+tN_4\in K(t)[X_0,X_1,X_2]$. Let $A'$ be the exponent matrix of $C$ (considered as a curve in $\Ps^2_{K(t)}$). Set
  \[ B:=\left(\begin{matrix} 1&0&0\\0&1&0\\0&0&1\\ 0&0&1\end{matrix}\right) \mbox{ if } \ell_1\not\subset S \mbox{ and }  B:=\left(\begin{matrix} 1&0&\frac{-1}{d}\\0&1&\frac{-1}{d}\\0&0&\frac{d-1}{d}\\ 0&0&\frac{d-1}{d}\end{matrix}\right) \mbox{ otherwise.}\]
 Then $A'=AB$. Since $A$ is invertible and $B$ has rank $3$ it follows that $\rank A'=3$. Moreover the first three rows of $A'$ are linearly independent, since the upper $3\times 3$-minor of $A'$ equals the upper $3\times 3$-minor of $A$ times the upper $3\times 3$ minor of $B$.  
 
In particular, there is a vector $\bk$, unique up to scalar multiplication, such that $\bk A'=0$. Since the upper three rows of $A'$ are linearly independent it follows that the fourth entry $k_4$ of $\bk$ is nonzero. We can make the vector $\bk$ unique, by requiring that $k_4>0$, $k_i\in \Z$ for $i=1,\dots, 4$ and $\gcd(k_1,k_2,k_3,k_4)=1$.
Moreover from $\rank A'=3$ and the fact that $(0,0,1,-1)B$ vanishes it follows that $\bk\in \spa \{(0,0,1,-1)A^{-1}\}$.

Since none of the rows of $A'$ is zero there are at least two nonzero entries in $\bk$. Suppose that there are precisely two nonzero entries,  say $k_i$ and $k_4$. Then $-k_i$ times the $i$-th row of $A'$ equals $k_4$ times the fourth row of $A'$. Each row sum of $A'$ equals the degree of $C$, say $d$. From this it follows that $k_id=-k_4d$ and hence that  $k_i=-1,k_4=1$. In particular, the $i$-th row and the fourth row coincide. After permuting $m_1,m_2,m_3$, if necessary, we may assume that the affine equation for the standard fibration is of the form $m_1+m_2+(1+t)m_3$.

Hence if the four monomials $m_1,m_2,m_3,m_4$  (in $x,y$) are distinct then at least three of the four entries of $\bk$ are nonzero.

Let $A'_i$ be the $i$-th row of $A'$.  Recall that each row sum  of  $A'_i$ equals $d$. Since $\sum k_iA_i$ equals zero it follows that  $0=\sum_i k_i\sum_j A'_{i,j}=\sum_i k_id$ and hence $\sum k_i=0$.  Let $p$ be a prime number dividing one of the $k_i$. Since  $\gcd(k_1,\dots,k_4)=1$ there is a $j$ such that $p\nmid k_j$. From $\sum k_i=0$ it follows that there is a $j'\neq j$ such that $p\nmid k_{j'}$. Hence each prime number $p$ does not divide at least two of the entries of $\bk$.
 \end{remark}

\begin{proposition}\label{propDiscrim} Let $S\to \Ps^1$ be a minimal Delsarte fibration. Let $A'$, $\bk$ and $N_i$ be as above. Suppose that the fiber over $t=t_0$ is singular and $t_0\neq0,\infty$ then
\[  t_0^{k_4}-\prod_{i: k_i\neq 0} k_i^{k_i}=0\]
or two of the $N_i$ coincide.
\end{proposition}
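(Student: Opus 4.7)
The plan is to characterize the singular points of the fiber $C_{t_0}: G_0 := c_1 N_1 + c_2 N_2 + c_3 N_3 + c_4 N_4 = 0$, where $(c_1, c_2, c_3, c_4) = (1, 1, 1, t_0)$, by exploiting the monomial form of the $N_i$. Since each $N_i = \prod_j X_j^{a'_{i,j}}$ satisfies the identity $X_j\,\partial N_i/\partial X_j = a'_{i,j} N_i$, the vanishing of the partial derivatives of $G_0$ at a point $P = (x_0:x_1:x_2)$, after multiplying by $x_j$, yields the linear system $\sum_i a'_{i,j} c_i N_i(P) = 0$ for $j = 0, 1, 2$. Equivalently, the vector $v \in K^4$ with $v_i := c_i N_i(P)$ lies in $\ker (A')^T$; since $\rank A' = 3$ with left kernel spanned by $\bk$ (Remark~\ref{remMinFib}), we conclude $v = \lambda \bk$ for some scalar $\lambda \in K$.

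In the main case $x_0 x_1 x_2 \neq 0$, every $N_i(P)$ is nonzero, so $\lambda \neq 0$ and $k_i \neq 0$ for all $i$. The relation $\bk A' = 0$ is equivalent to the monomial identity $\prod_i N_i^{k_i} = \prod_j X_j^{\sum_i k_i a'_{i,j}} = 1$. Evaluating at $P$ via $N_i(P) = \lambda k_i / c_i$ and using $\sum_i k_i = 0$ (Remark~\ref{remMinFib}) gives
\[ 1 \;=\; \prod_i \left(\frac{\lambda k_i}{c_i}\right)^{k_i} \;=\; \lambda^{\sum_i k_i} \frac{\prod_i k_i^{k_i}}{\prod_i c_i^{k_i}} \;=\; \frac{\prod_i k_i^{k_i}}{t_0^{k_4}}, \]
which rearranges to $t_0^{k_4} = \prod_{i:\,k_i \neq 0} k_i^{k_i}$.

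The remaining boundary cases, where some coordinate of $P$ vanishes, are handled by restriction. Say $x_0 = 0$, and let $I_0 := \{i : a'_{i,0} = 0\}$ index the monomials not containing $X_0$. Then $v_i = 0$ for $i \notin I_0$, and the argument reduces to the restricted linear system on the entries indexed by $I_0$. When $|I_0| = 3$, the $j = 0$ equation in $\bk A' = 0$ forces $k_i = 0$ for the unique $i \notin I_0$; the restricted system still has one-dimensional kernel spanned by $\bk|_{I_0}$, because the three rows $(a'_{i,1}, a'_{i,2})$ for $i \in I_0$ all lie on the affine line $x + y = d$ in $\Z^2$ and hence admit a unique affine relation. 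The same monomial-identity computation applied to the identity $\prod_{i \in I_0} N_i^{k_i} = 1$ (now in $X_1, X_2$) recovers the formula. When $|I_0| = 2$, the restricted $2 \times 2$ system is either invertible (forcing $v = 0$, no singular point) or degenerate precisely when the two rows of $A'$ indexed by $I_0$ coincide, i.e., two $N_i$'s coincide—the escape clause. The cases $|I_0| \le 1$ push any hypothetical singular point to a coordinate vertex, which is easily checked to lie off $C_{t_0}$ unless two $N_i$'s coincide. The analysis for $x_1 = 0$ and $x_2 = 0$ is symmetric.

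The main obstacle is the boundary-case bookkeeping: beyond the linear system, one must also verify the auxiliary constraint $\partial G_0/\partial X_0(P) = 0$, which reads $\sum_{i \in J_0} c_i \prod_{k \neq 0} x_k^{a'_{i,k}} = 0$ with $J_0 := \{i : a'_{i,0} = 1\}$; in the relevant subcases this constraint is either automatic (when $J_0 = \emptyset$, i.e., $a'_{i_*, 0} \geq 2$ for the unique $i_* \notin I_0$) or it rules out singular points on $x_0 = 0$ entirely, so it never spoils the conclusion.
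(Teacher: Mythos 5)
Your main case ($x_0x_1x_2\neq 0$) is essentially the paper's own argument in transposed form: the Euler relations $X_j\,\partial N_i/\partial X_j=a'_{i,j}N_i$ place the vector $(N_1,N_2,N_3,t_0N_4)$ evaluated at $P$ in the left kernel of $A'$, hence make it proportional to $\bk$, and the monomial identity $\prod_iN_i^{k_i}=1$ together with $\sum_ik_i=0$ gives $t_0^{k_4}=\prod k_i^{k_i}$. That part is correct and is exactly the paper's first case.

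The gap is in your treatment of the degenerate cases. Your linear algebra does correctly force any remaining candidate singular point down to a coordinate vertex with $N_1(P)=\dots=N_4(P)=0$ (using that $\bk$ has at least three nonzero entries when the $N_i$ are distinct, and that the $N_i$ have trivial gcd). But the assertion that such a vertex ``is easily checked to lie off $C_{t_0}$ unless two $N_i$'s coincide'' is false: $(0:0:1)$ lies on every member of the pencil as soon as no $N_i$ equals $X_2^d$, and it can be a singular point of every member --- for instance $N_1=X_0^d$, $N_2=X_1^d$, $N_3=X_0^2X_2^{d-2}$, $N_4=X_1^2X_2^{d-2}$ gives, in affine coordinates at the vertex, $u^d+v^d+u^2+tv^2$, a node for every $t\neq 0$, with all four $N_i$ distinct. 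The same error occurs in your $|I_0|=2$ subcase (``forcing $v=0$, no singular point''): $v=0$ does not exclude a singular point, it merely locates it at a vertex. The proposition nevertheless holds because it is about singular fibers of the \emph{fibration}, not singular members of the pencil of plane curves; the paper disposes of these vertex singularities by observing that, since the four $N_i$ are distinct, $m_1+m_2+m_3+tm_4$ is an equisingular deformation for $t$ near $t_0\neq0$, so the singularity is resolved simultaneously in $t$ and contributes no singular fiber over $t_0$. This simultaneous-resolution step is the idea missing from your proposal, and without it (or a substitute) the boundary case is not closed. By contrast, your worry about the unmultiplied constraint $\partial G_0/\partial X_0(P)=0$ is harmless: you are only deriving necessary conditions for a singular point, so dropping an equation can only enlarge the candidate set.
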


\begin{proof}
Let $G\in K(t)[X_0,X_1,X_2]$ be as above. Then $G$ defines a pencil of plane curves in $\Ps^2_K$. Assume that no two of the $N_i$ coincide. We aim at determining the singular members of the pencil defined by $G$.
Let $B_t$ be the matrix obtained from  $A'$ by multiplying the fourth row  by $t$. Let us consider the matrix $B_{t_0}$ for some  $t_0\in K^*$.
Since the upper $3\times 3$ minor of $B_{t_0}$ equals the upper $3\times 3$ minor of $A'$, and this minor is nonzero, it follows that $\rank B_{t_0}=3$. Hence the kernel of right-multiplication by $B_{t_0}$ is one-dimensional and is generated by $(k_1,k_2,k_3,\frac{k_4}{t_0})$.

Consider now the closure of the image of the rational map $M:K^3\dashrightarrow K^4$ sending $(x,y,z)$ to $(N_1,N_2,N_3,N_4)$. Let $z_1,z_2,z_3,z_4$ be the coordinates on $K^4$. Then by the definition of the vector $(k_1,k_2,k_3,k_4)$ one has that $\prod N_i^{k_i}=1$ holds, i.e., on the image of $M$ one has 
\[ \prod z_i^{k_i}=1\]
Since the greatest common divisor of the $k_i$ equals one, this defines an irreducible hypersurface $\overline{V}$ in $K^4$. Moreover, from the fact that $\rank A'$ equals 3 it follows that $M$ has finite fibers, hence the image of $M$ is three-dimensional and the closure of the image of $M$ is precisely the closure of $ \prod z_i^{k_i}=1$.

We want now to determine the values for which $t_0$ the corresponding member of the pencil of plane curves is singular. Hence we want to find $(x_0:y_0:z_0)\in \Ps^2$ and $t_0\in K^*$ such that  for $(t,X_0,X_1,X_2)=(t_0,x_0,y_0,z_0)$  the vector $(G_{X_0},G_{X_1},G_{X_2})$ is zero. In particular, the vector  $(X_0G_{X_0},X_1G_{X_1},X_2G_{X_2})$ is zero. A direct calculation shows that the latter vector equals $(N_1,N_2,N_3,tN_4)A'$, which in turn equals $(N_1,N_2,N_3,N_4)B_t$. 
Hence if $(x_0,y_0,z_0)$ is a singular point of a fiber over $t=t_0$ then  $M(x_0,y_0,z_0)$ is contained in $\ker B_{t_0}\cap \overline{V}$. 

We consider first the case where  $M(x_0,y_0,z_0)$ is nonzero and $t_0\neq 0$. Then $ \prod_{i:k_i\neq 0} z_i^{k_i}=1$ and  $(z_1,z_2,z_3,z_4)$ is a multiple of $(k_1,k_2,k_3,k_4/t_0)$. In particular,
\[ \frac{\prod_{i:k_i\neq 0} k_i^{k_i}}{t_0^{k_4}}=1\] 
holds, which finishes the case where $M(x_0,y_0,z_0)$ is nonzero.

To finish we show that if $t_0\neq 0$ and $\ker B_{t_0}\cap \overline{V}$ consists only of $(0,0,0,0)$ then the fiber over $t_0$ is smooth. Since $\ker B_{t_0}\cap \overline{V}$ consists only of $(0,0,0,0)$ each singular point of the fiber satisfies $N_1=N_2=N_3=N_4=0$. In particular at least two of the $X_i$ are zero. Without loss of generality we may assume that the point $(0:0:1)$ is singular. Consider now $G(x,y,1)$ and write this as $m_1+m_2+m_3+tm_4$.

Since all the four $N_i$ are distinct we have that $m_1+m_2+m_3+tm_4=0$ is an equisingular deformation of $m_1+m_2+m_3+t_0m_4$ for $t$ in a small neighborhood of $t_0$. Hence we can resolve this singularity simultaneously for all $t$ in a neighborhood of $t_0$. Therefore all fibers in a neighborhood of $t_0$ are smooth and, in particular, the fiber over $t_0$ is smooth.
\end{proof}

\begin{lemma}\label{lemBaseAuto} Let $\varphi: S\to \Ps^1$ be a minimal Delsarte fibration. Then there is an automorphism $\sigma:S\to S$, mapping  fibers of $\varphi$ to fibers, such that its action on the base curve is  $t\mapsto \zeta_{k_4} t$.
\end{lemma}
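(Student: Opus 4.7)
The plan is to realize $\sigma$ as the restriction to $S$ of a diagonal automorphism of $\Ps^3$ of the form $(X_0:X_1:X_2:X_3) \mapsto (\lambda_0 X_0:\lambda_1 X_1:\lambda_2 X_2:\lambda_3 X_3)$ where the $\lambda_j$ are roots of unity. Writing $\lambda_j = e^{2\pi i u_j}$ with $u_j \in \Q/\Z$ and setting $\bu = (u_0,u_1,u_2,u_3)^T$, the $i$-th monomial of $F$ is scaled by $e^{2\pi i (A\bu)_i}$. Hence the requirement that $\sigma^*F$ is a scalar multiple of $F$ becomes $A\bu \equiv v(1,1,1,1)^T \pmod{\Z^4}$ for some $v\in\Q/\Z$, while the condition that $\sigma$ induces $t\mapsto \zeta_{k_4}t$ on the base $\Ps^1 = \{(X_2:X_3)\}$ reads $u_2 - u_3 \equiv 1/k_4 \pmod \Z$.

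Since $A$ is invertible over $\Q$ and $A(1,1,1,1)^T = d(1,1,1,1)^T$, the solutions of the first congruence form the coset $(v/d)(1,1,1,1)^T + A^{-1}\Z^4 \pmod{\Z^4}$. Taking $v = 0$ and writing $\bu = A^{-1}\bn$ with $\bn\in\Z^4$, the second condition becomes $(0,0,1,-1)A^{-1}\bn \equiv 1/k_4 \pmod \Z$. By Remark~\ref{remMinFib}, $\bk$ spans $\spa\{(0,0,1,-1)A^{-1}\}$, so write $(0,0,1,-1)A^{-1} = \bk/c$ for some nonzero scalar $c$. The task is then to find $\bn\in\Z^4$ with $\bk\bn/c \equiv 1/k_4 \pmod \Z$.

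The crux is the identification $c = k_4$. Multiplying $\bk = c(0,0,1,-1)A^{-1}$ on the right by $A$ gives $\bk A = c(0,0,1,-1)$, so $c$ equals the pairing of $\bk$ with the third column $A^{(2)}$ of $A$. By Definition~\ref{defMinFib}, among the four monomials of $F$ only the last, $X_2 M_4$, involves $X_2$, and it does so with exponent $1$; consequently $A^{(2)} = (0,0,0,1)^T$, giving $\bk A^{(2)} = k_4$, so $c = k_4$. The condition now reads $\bk\bn \equiv 1 \pmod{k_4}$, which is solvable by Bezout because $\gcd(k_1,k_2,k_3,k_4) = 1$. For such an $\bn$ the vector $\bu = A^{-1}\bn \in \Q^4$ has rational entries, so the $\lambda_j = e^{2\pi i u_j}$ are roots of unity; the resulting $\sigma$ preserves $F$ up to a constant, hence induces an automorphism of $S$, and acts on the base by $t\mapsto \zeta_{k_4}t$. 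The only step that requires genuine attention is the equality $c = k_4$, which is forced by the minimality of the fibration.
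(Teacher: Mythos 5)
Your proof is correct, and at bottom it rests on the same mechanism as the paper's: the automorphisms you construct --- diagonal maps of $\Ps^3$ with exponent vector $\bu\in A^{-1}\Z^4\bmod \Z^4$ --- are precisely the symmetries the paper obtains by descending the $(\Z/d\Z)^3$-action on the covering Fermat surface of degree $d$ (whose covering map has exponent matrix $D=dA^{-1}$). Where you genuinely diverge is in how the correct group element is located. The paper never identifies the proportionality constant between $(0,0,1,-1)A^{-1}$ and $\bk$; instead it introduces an auxiliary integer $m$ with $mk_i=d_{2i}-d_{3i}$, argues prime power by prime power through the factorization of $k_4$ (using that each prime misses at least one $k_i$), and assembles $\sigma$ as a product of automorphisms $\sigma_{p^{t_i}}$. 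Your observation that minimality (condition (1) of Definition~\ref{defMinFib}) forces the $X_2$-column of $A$ to be $(0,0,0,1)^T$, whence $\bk A=k_4(0,0,1,-1)$ and the constant is exactly $k_4$, collapses all of this into a single application of Bezout with $\gcd(k_1,\dots,k_4)=1$. This is shorter, avoids the prime decomposition entirely, and makes transparent exactly where the minimality hypothesis enters; the paper's version, by contrast, sidesteps the need to compute the constant but at the cost of a more delicate (and, as written, somewhat garbled) bookkeeping of roots of unity. The only point you gloss over is that the diagonal map is compatible with resolving the base locus $S\cap\ell_1$ of the projection, but this is immediate since it preserves both coordinate lines $\ell_1$ and $\ell_2$.
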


\begin{proof}
Let $d$ be the smallest integer such that $D:=dA^{-1}$ has integral coefficients. 
Let $T=\{\sum X_i^d=0\}\subset \Ps^3$ be the Fermat surface of degree $d$. Then there is a rational map $T\dashrightarrow S$ given by $(X_0:X_1:X_2:X_3)\mapsto ( \prod X_j^{d_{0j}}: \dots: \prod X_j^{d_{3j}})$.  On $T$ there is a natural action of $(\Z/d\Z)^3$, given by  $(X_0:X_1:X_2:X_3)\mapsto (\zeta_d^{a_1}X_0:\zeta_d^{a_2}X_1:\zeta_d^{a_3}X_2:X_3)$. On the affine chart $X_3\neq 0$ with coordinates $x,y,t$ this action is given by $(x,y,t)\mapsto (\zeta_d^{a_1}x,\zeta_d^{a_2}y,\zeta_d^{a_3}t)$.

The rational map $T\dashrightarrow S$ is given (in affine coordinates) by 
\[(x,y,t)\mapsto \left(\frac{x^{d_{00}}y^{d_{01}}t^{d_{02}}}{x^{d_{30}}y^{b_{31}}t^{b_{32}}},\frac{x^{d_{10}}y^{d_{11}}t^{d_{12}}}{x^{d_{30}}y^{d_{31}}t^{d_{32}}},\frac{x^{d_{20}}y^{d_{21}}t^{d_{22}}}{x^{d_{30}}y^{d_{31}}t^{d_{32}}}\right).\]

The action of $(\Z/d\Z)^3$ descents to $S$ and respects the standard fibration. Let $t=X_2/X_3$ be a coordinate on the base of the standard fibration. Then  $(a_1,a_2,a_3)\in (\Z/d\Z)^3$ acts  as $t \mapsto \zeta_d^e t$ with $e\equiv (d_{20}-d_{30})a_1+(d_{21}-d_{31})a_2+(d_{22}-d_{32})a_3\bmod d$.
Since $\bk$ as defined in Remark \ref{remMinFib} is proportional to $(0,0,1,-1)A^{-1}$ it follows that $\bk$ is proportional to $(d_{20}-d_{30},d_{21}-d_{31},d_{22}-d_{32},d_{23}-d_{33})$, i.e., there is an $m\in \Z$ such that $mk_i=d_{2i}-d_{3i}$. In particular, setting $d'=d/m$ it follows that $(a_0,a_1,a_2)$ acts as $t \mapsto \zeta_{d'}^e t$ with $e\equiv k_1a_1+k_2a_2+k_3a_3\bmod d'$.

Let $p$ be a prime number and suppose that $p^m$ divides $k_4$. Since $k_4$ is a divisor of $d$ and the greatest common divisor of the $k_i$ equals one it follows that $p^m$ also divides $d'$. Since the greatest common divisor of the $k_i$ equals one it follows that at least one of the $k_i$ is not divisible by $p$. Without loss of generality we may assume that $k_1$ is invertible modulo $p$. From this it follows that we can choose $a_1$ in such a way that $a_1k_1+a_2k_2+a_3k_3\equiv 1 \bmod p^m$.

The corresponding automorphism $\sigma'_{p^m}$ of $S$ maps $t$ to $t\times \zeta t$ where $\zeta$ is a primitive $p^tn$-th root of unity. Take now $\sigma_{p^m}:=(\sigma'_{p^m})^n$. Then $\sigma_{p^m}$ multiplies $t$ with a primitive $p^t$-root of unity.
Write now $k_4=\prod p_i^{t_i}$. Then $\sigma:=\prod_i \sigma_{p_i^{t_i}}$ multiplies $t$ with a primitive $k_4$-th root of unity.
\end{proof}

\begin{proposition}\label{propBaseChangeOneSingFib} Let $S\to \Ps^1$ be a Delsarte fibration with $\det(A)\neq 0$ then there exists a Delsarte base change $S_n\to \Ps^1$ of $S\to \Ps^1$ which is isomorphic to the base change of a genus $g$ fibration $S_0\to \Ps^1$ with at most one singular fiber outside $0,\infty$.
\end{proposition}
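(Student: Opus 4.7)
The plan is to reduce $S$ to a minimal Delsarte fibration and then construct $S_0$ as the cyclic quotient associated with the base automorphism of Lemma~\ref{lemBaseAuto}. First I would invoke Lemma~\ref{lemOneT} to replace $S$ by a Delsarte base change of itself with affine equation of the form $m_1 + m_2 + m_3 + t^n m_4$; the discussion following that lemma shows that, as far as singular fibers away from $\{0, \infty\}$ are concerned, we may take $n = 1$. Call the resulting surface $S_n$, so that $S_n$ is a minimal Delsarte fibration in the sense of Definition~\ref{defMinFib}, and let $\bk = (k_1, k_2, k_3, k_4)$ and $N_1, \ldots, N_4$ be as in Remark~\ref{remMinFib}.

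The argument then splits along the dichotomy of Proposition~\ref{propDiscrim}. If the four monomials $N_i$ are pairwise distinct, then the singular fibers of $S_n$ outside $\{0, \infty\}$ lie over the $k_4$ solutions of $t_0^{k_4} = \prod_{i : k_i \neq 0} k_i^{k_i}$, and Lemma~\ref{lemBaseAuto} provides an automorphism $\sigma$ of $S_n$ acting on the base as $t \mapsto \zeta_{k_4} t$, permuting this set cyclically. I would then take $S_0$ to be the quotient $S_n / \langle \sigma \rangle$, endowed with base coordinate $u = t^{k_4}$; by construction $S_n$ is the pullback of $S_0$ along the $k_4$-th power map, and the entire orbit of singular fibers of $S_n$ collapses onto the single point $u_0 = \prod_{i : k_i \neq 0} k_i^{k_i}$, so $S_0$ has at most one singular fiber outside $\{0, \infty\}$. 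If instead two of the $N_i$ coincide, Remark~\ref{remMinFib} allows me to rewrite the equation as $m_1 + m_2 + (1 + t) m_3 = 0$; substituting $u := 1 + t$ yields the three-monomial equation $m_1 + m_2 + u m_3 = 0$, whose total space carries a $K^*$-action scaling $u$, obtained by a suitable monomial rescaling of $x$ and $y$. Since this action identifies all fibers over $u \neq 0, \infty$, taking $S_0 := S_n$ with coordinate $u$ already produces a fibration with no singular fibers outside $\{0, \infty\}$.

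The main technical obstacle is the quotient step in the first case: one has to check that $S_n / \langle \sigma \rangle$ is a normal projective surface whose natural map to $\Ps^1$ is a fibration pulling back to $S_n$ under $t \mapsto t^{k_4}$, with ramification confined to $t \in \{0, \infty\}$, and that the generic fiber of $S_0$ still has geometric genus $g$. In the second case the point to verify is that the $K^*$-scaling of $u$ really sweeps out all of $K^*$; this reduces to the linear independence in $\Z^2$ of two exponent-difference vectors built from $N_1, N_2, N_3$, which fails only when the three exponent vectors are collinear, a situation in which the generic fiber of $S_n$ would be rational and can be absorbed into the preliminary reduction isolated in Remark~\ref{rmkDet}.
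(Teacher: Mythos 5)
Your proof follows essentially the same route as the paper: reduce to a minimal Delsarte fibration via Lemma~\ref{lemOneT}, then quotient by the automorphism of Lemma~\ref{lemBaseAuto} so that the single $\sigma$-orbit of singular fibers located by Proposition~\ref{propDiscrim} collapses to one point of $\Ps^1/\langle\sigma\rangle$. Your explicit treatment of the degenerate case where two of the $N_i$ coincide (via the isotriviality of $m_1+m_2+(1+t)m_3$) is a small addition that the paper's own proof passes over in silence, but it does not change the substance of the argument.
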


\begin{proof}
 From Lemma~\ref{lemOneT} it follows that we may assume that the Delsarte fibration is  a minimal Delsarte fibration, i.e., we have an affine equation for the generic fiber of the form $m_1+m_2+m_3+tm_4$, where the $m_i$ are monomials in $x$ and $y$. 
 On a minimal Delsarte fibration $\varphi :S\to \Ps^1$ there is an automorphism of order $k_4$ that acts on the $t$-coordinate as $t\mapsto \zeta_{k_4}t$. In particular, all the fixed points of this automorphism are in the fibers over 0 and $\infty$. 

Consider next $\psi:S/\langle\sigma\rangle\to \Ps^1/\langle\sigma\rangle\cong \Ps^1$. Now the singular fibers of $\varphi$ are possibly at $t=0, \infty$ and at $t^{k_4}=\prod k_i^{k_i}$, hence the singular fibers of $\psi$ are possibly at $t=0,\infty$ and $t=\prod k_i^{k_i}$.
\end{proof}

\begin{proposition}\label{propStandardForm} Let $\varphi: S\to \Ps^1$ be a minimal Delsarte fibration with affine equation $m_1+m_2+m_3+tm_4$ such that the general fiber has positive geometric genus. Then one of the following happens
\begin{itemize}
\item $m_4$ equals one of $m_1,m_2,m_3$. In this case the fibration is isotrivial.
\item $S$ is Delsarte-birational to a Delsarte surface with equation of the form $y^a=f(x,t)$.
\item every singular fiber over $t=t_0$ with $t_0\neq 0,\infty$ is semistable.
\end{itemize}
\end{proposition}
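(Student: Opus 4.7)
The plan is to split on whether the four monomials $m_i$ are distinct and, in the distinct case, on whether three of their exponent vectors in $\Z^2$ are collinear.

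First, if $m_4 = m_i$ for some $i \in \{1,2,3\}$---say $m_4 = m_3$---the affine equation is $m_1 + m_2 + (1+t) m_3 = 0$. For each nonzero $c = 1+t$, the substitution $(x,y) \mapsto (\lambda x, \mu y)$ with $\lambda^{a_1 - a_2}\mu^{b_1 - b_2} = 1$ and $\lambda^{a_2 - a_3}\mu^{b_2 - b_3} = c$ identifies the fiber with $m_1 + m_2 + m_3 = 0$; the relevant $2\times 2$ system is solvable because $\det A\neq 0$ forces the points $P_1,P_2,P_3$ to be non-collinear in $\Z^2$. This gives isotriviality.

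Otherwise the four $m_i$ are distinct, so by Remark~\ref{remMinFib} at least three of the $k_i$ are nonzero, and exactly one $k_i$ vanishes precisely when the remaining three exponent vectors $(a_j, b_j) \in \Z^2$ are collinear. In that case I would produce case~2 as follows. After a Delsarte birational change of coordinates (multiplying the equation by an appropriate Laurent monomial in $x, y$ to translate the three collinear points onto a line through the origin in a primitive direction $(p, q)$, and completing $(p, q)$ to a $\Z$-basis $(p,q),(p',q')$ of $\Z^2$), the monomial substitution $u = x^p y^q$, $v = x^{p'} y^{q'}$ turns the three collinear monomials into $u^{\lambda_i}$ and the fourth into $u^\alpha v^\beta$ with $\beta \neq 0$. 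Dividing through by $u^\alpha$ and composing with the Delsarte base change $t \mapsto 1/t$ brings the equation into $v^\beta = -t(u^{\lambda_1 - \alpha} + u^{\lambda_2 - \alpha} + u^{\lambda_3 - \alpha})$, which is of the form $y^a = f(x,t)$.

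In the remaining case no three of the $(a_i, b_i)$ are collinear, so all four $k_i$ are nonzero. By the proof of Proposition~\ref{propDiscrim}, any singular point of a fiber over $t_0 \neq 0,\infty$ lies in the torus $\{X_0 X_1 X_2 \neq 0\}$; the non-torus singular points are base points of the pencil and admit simultaneous equisingular resolution. Expanding $h = \sum_i \epsilon_i x^{a_i} y^{b_i}$ around such a torus singular point $(x_0, y_0)$ and using the identity $\epsilon_i x_0^{a_i} y_0^{b_i} = c k_i$ from Proposition~\ref{propDiscrim} together with $\sum k_i = \sum k_i a_i = \sum k_i b_i = 0$, the linear part of the Taylor expansion vanishes automatically and the quadratic part has discriminant, up to a nonzero factor, equal to
\[
H := \Big(\sum_i k_i a_i^2\Big)\Big(\sum_i k_i b_i^2\Big) - \Big(\sum_i k_i a_i b_i\Big)^2,
\]
which by Cauchy--Binet equals $\sum_{i<j} k_i k_j \Delta_{ij}^2$ with $\Delta_{ij} := a_i b_j - a_j b_i$. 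The main obstacle is to show $H \neq 0$. Setting $L_i := a_i u + b_i v$ and $Q := \sum_i k_i L_i^2$, the relations $\sum k_i = 0 = \sum k_i L_i$ and the polarization identity $(a+b+c)(aX^2 + bY^2 + cZ^2) - (aX + bY + cZ)^2 = ab(X-Y)^2 + ac(X-Z)^2 + bc(Y-Z)^2$ give
\[
-k_4 Q = k_1 k_2(L_1 - L_2)^2 + k_1 k_3(L_1 - L_3)^2 + k_2 k_3(L_2 - L_3)^2.
\]
Since no three $(a_i, b_i)$ are collinear, $L_1 - L_2$ and $L_1 - L_3$ are linearly independent linear forms in $(u, v)$, so the right-hand side is a quadratic form in the pair $(L_1 - L_2, L_1 - L_3)$ with determinant $-k_1 k_2 k_3 k_4$. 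Since all $k_i$ are nonzero this is nonzero, whence $H \neq 0$ and the quadratic part at the singular point is nondegenerate. Hence the singularity is an ordinary node and every singular fiber over $t_0 \neq 0,\infty$ is semistable.
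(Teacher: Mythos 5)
Your proof is correct, and in the decisive case it takes a genuinely different route from the paper. You organize the argument as a clean combinatorial trichotomy on the exponent vectors ($m_4$ repeated; three exponent vectors collinear, equivalently exactly one $k_i=0$; no three collinear, equivalently all $k_i\neq 0$), whereas the paper derives the same split geometrically from the position of a singular point of a singular fiber: it shows that at such a point $(N_1:N_2:N_3:N_4)=(k_1:k_2:k_3:k_4/t_0)$, so a vanishing $k_i$ forces a singular point on a coordinate line and hence forces $m_2,m_3,m_4$ to be pure powers of $x$, which is exactly your collinearity condition. The real divergence is in the third case: the paper proves the singularity is a node by a length/Milnor-number computation, pushing the Jacobian scheme forward along the unramified monomial map to the hypersurface $\prod z_i^{k_i}=1$ and arguing the resulting scheme has length one; you instead compute the logarithmic Hessian directly, obtaining the explicit discriminant $H=\sum_{i<j}k_ik_j\Delta_{ij}^2$ and proving $H\neq 0$ via the polarization identity $-k_4Q=k_1k_2(L_1-L_2)^2+k_1k_3(L_1-L_3)^2+k_2k_3(L_2-L_3)^2$, whose determinant in the coordinates $(L_1-L_2,L_1-L_3)$ is $-k_1k_2k_3k_4\neq 0$. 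This is more elementary and, in my view, more transparent than the paper's scheme-theoretic argument (which is quite terse at exactly this point); it also explains \emph{why} all $k_i\neq 0$ is the relevant hypothesis. Two small remarks: you also supply an explicit proof of isotriviality in the first case via a torus action (the paper merely asserts it), and your final displayed equation in the collinear case, $v^\beta=-t(u^{\lambda_1-\alpha}+u^{\lambda_2-\alpha}+u^{\lambda_3-\alpha})$, cannot be right as written --- since $k_4\neq 0$ the collinear triple always contains $m_4$, so exactly one of the three $u$-monomials carries the $t$, giving $v^\beta=-(u^{c_1}+u^{c_2}+tu^{c_3})$; but this is already of the required form $y^a=f(x,t)$, so the slip is harmless.
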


\begin{proof} Assume that all four  $m_i$ are distinct. Let $N_i$ be as in Remark~\ref{remMinFib}.
Let $t_0\in K^*$ be such that the fiber over $t=t_0$ is singular. Let $P=(X_0:X_1:X_2)\in \Ps^2$ be a singular point of the fiber. 
From the proof of Proposition~\ref{propDiscrim} it follows that at least one of the $N_i$ is nonzero and that $(N_1:N_2:N_3:N_4)=(k_1:k_2:k_3:\frac{k_4}{t_0})$ holds.
From Remark~\ref{remMinFib} it follows that at most one of the $k_i$ is zero. 

Suppose first that one of the $k_i$, say $k_1$ is zero. This implies that $N_1$ vanishes and that the other $N_i$ are nonzero. Therefore one of the coordinate of $P$ has to be zero (in order to have  $N_1=0$). If two of the coordinates of $P$ are zero then from  $\det(A)\neq0$ it follows that there is some $i\neq 1$ such that $N_i=0$, which contradicts the fact that at most one $N_i$ vanishes. Hence without loss of generality we may assume  $P=(\alpha:0:1)$ with $\alpha\neq 0$, $X_1\mid N_1$ and $X_1\nmid N_i$ for $i=1,2,3$. In particular, we have an affine equation for the fibration of the form $m_1+m_2+m_3+tm_4$, where $y$ divides $m_1$, and $m_2,m_3,m_4$ are of the form $x^{a_i}$.
 Multiply the equation with a power of $x$ such that $m_1$ is of the form $x^{ab}y^{b}$ and set $y_1=y/x^a$. Then we obtain an equation of the form $y_1^b=f(x,t)$, where $f(x,t)$ is of the form $x^a+x^b+tx^c$. This yields the second case.

It remains to consider the case where all the $N_i$ are nonzero. Let $P\in S$ be a point where the fiber over $t=t_0$ singular. Let $f$ be an affine equation for $S$. We prove below that if we localize $K[x,y,z,t]/(f_x,f_y,f_z,t-t_0)$ at $P$ then this ring is isomorphic to $k[x]$. Hence the scheme defined by the Jacobian ideal of fiber at $t=t_0$ has length one at the point $P$. Equivalently, the Milnor number of the singularity of the fiber at $t=t_0$ at the point $P$ equals one. In particular, the singularity of the fiber at $P$ is an ordinary double point.

Consider now the rational map $\tau:\Ps^2\setminus V(X_0X_1X_2) \to \Ps^3$ given by $(X_0:X_1:X_2)\mapsto (N_1:N_2:N_3:N_4)$. The map $\tau$ is unramified at all points $Q\in \Ps^2$ such that  $\tau (Q)\not \in V(X_0X_1X_2X_3)$.

Since we assumed that all the $N_i$ are nonzero it follows that also all the $X_i$ are nonzero. Hence the length of $V(f_{X_0},f_{X_1},f_{X_2},t-t_0)$ at $P$ equals the length of $V(X_0f_{X_0},X_1f_{X_1},X_2f_{X_2},t-t_0)$ at $P$. From the proof of Proposition~\ref{propDiscrim} it follows that $V(X_0f_{X_0},X_1f_{X_1},X_2f_{X_2},t-t_0)$ is the scheme-theoretic intersection of $\ker B_{t_0}$ and $V(\prod z_i^{k_i}-1)$ and that this intersection is locally given by
$V(k_4Z_0-t_0k_1Z_3,k_4Z_1-t_0k_2Z_3,k_4Z_2-t_0k_3Z_4, Z_2-t_0Z_4)$, whence the length of the scheme equals one, and therefore the local Milnor number equals one, and the singularity is an ordinary double point.
\end{proof}

\begin{theorem}\label{thmSingFib} Suppose $S\to \Ps^1$ is a Delsarte fibration of genus $1$ with nonconstant $j$-invariant. Then every singular fiber at $t\neq 0,\infty$ is of type $I_\nu$.
\end{theorem}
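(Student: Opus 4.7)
The plan is to invoke Proposition~\ref{propStandardForm} and dispatch its three alternatives. After reducing to a minimal Delsarte fibration via Lemma~\ref{lemOneT}---legitimate since Delsarte base changes are unramified over $t\neq 0,\infty$ and thus preserve both the singular fiber types in question and the nonconstancy of $j$---I observe that positive geometric genus forces $\det(A)\neq 0$ by Remark~\ref{rmkDet}. The isotrivial case is then excluded by nonconstancy of $j$, and the semistable case yields the conclusion directly. Only the middle case, where the surface is Delsarte-birational to $y^a = x^{p_1}+x^{p_2}+tx^{p_3}$ with $p_1,p_2,p_3$ distinct nonnegative integers, requires further work.

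My next step is to show $a=2$. The map $(x,y,t)\mapsto(x,\zeta_a y,t)$ is an automorphism of order $a$ preserving each fiber, with fixed locus $\{y=0\}$; on each smooth fiber this locus is nonempty since $x^{p_1}+x^{p_2}+t_0 x^{p_3}$ has roots for any $t_0\neq 0,\infty$. Taking such a fixed point as origin on a smooth fiber $E_{t_0}$, the symmetry becomes an origin-fixing automorphism of the elliptic curve $E_{t_0}$ of order $a$. In characteristic zero the group of origin-fixing automorphisms of an elliptic curve has order $2$, $4$, or $6$, and contains an element of order $\geq 3$ only when $j\in\{0,1728\}$. Thus $a\geq 3$ would force $j$ to take values only in $\{0,1728\}$ on every smooth fiber, hence be constant; and $a=1$ is incompatible with positive geometric genus. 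We conclude $a=2$.

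Finally I analyze the singular fibers of $y^2 = f(x,t)$ with $f = x^{p_1}+x^{p_2}+tx^{p_3}$. The Delsarte-birational substitution $y\mapsto y/x^{\lfloor p_{\min}/2\rfloor}$ reduces to the case $\min(p_i)\in\{0,1\}$; in either subcase the point $(0,0)$ is either off the fiber over $t_0\neq 0$ or is a smooth point of it. Hence any singular point of the fiber over $t_0\neq 0,\infty$ lies at $(x_0,0)$ with $x_0\neq 0$ satisfying $f(x_0,t_0)=f_x(x_0,t_0)=0$. If additionally $f_{xx}(x_0,t_0)=0$, then the three equations
\[
f(x_0,t_0)=0,\quad x_0 f_x(x_0,t_0)=0,\quad x_0^2 f_{xx}(x_0,t_0)=0
\]
form a homogeneous linear system in $(x_0^{p_1},x_0^{p_2},t_0 x_0^{p_3})$ whose coefficient matrix, after the row operation $R_3\mapsto R_3+R_2$, becomes the Vandermonde
\[
\begin{pmatrix}1&1&1\\ p_1&p_2&p_3\\ p_1^2&p_2^2&p_3^2\end{pmatrix},
\]
with nonzero determinant $(p_2-p_1)(p_3-p_1)(p_3-p_2)$. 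This forces the triple to vanish, contradicting $x_0\neq 0$. So every singular point of the fiber is an ordinary double point, the fiber is semistable, and is of Kodaira type $I_\nu$.

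The main obstacle I expect is the step showing $a=2$: promoting the symmetry $y\mapsto\zeta_a y$ to an origin-fixing automorphism of the pointed elliptic curve requires choosing the origin among its fixed points and verifying that the resulting pointed automorphism truly has order $a$. Once that subtlety is handled, the Vandermonde computation and the birational cleanup are routine.
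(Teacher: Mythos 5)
Your proof follows the paper's skeleton almost exactly: reduce to a minimal Delsarte fibration, run through the three alternatives of Proposition~\ref{propStandardForm}, kill the isotrivial case by nonconstancy of $j$, take the semistable case for free, and in the case $y^a=f(x,t)$ force $a=2$ by observing that the order-$a$ automorphism $y\mapsto\zeta_a y$ fixes points on each fiber and so would pin $j$ to $\{0,1728\}$ if $a\geq 3$ --- this is word for word the paper's argument. Where you genuinely diverge is the final step. The paper, for $a=2$, splits into $\deg_x f=3$ and $\deg_x f=4$ and argues by hand that a root of multiplicity $\geq 3$ of a trinomial $f(x,t_0)$ forces $f(x,t_0)$ to be $(x-\alpha)^3$, $(x-\alpha)^4$ or $(x-\alpha)(x-\beta)^3$ and then derives a contradiction from monomial counting (the $b=-3a$ computation). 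You instead observe that $f=f_x=f_{xx}=0$ at $(x_0,t_0)$ gives a homogeneous linear system in $(x_0^{p_1},x_0^{p_2},t_0x_0^{p_3})$ whose matrix reduces to a Vandermonde in the distinct exponents $p_1,p_2,p_3$, so the triple must vanish, contradicting $x_0\neq0$, $t_0\neq 0$. This is cleaner, uniform in the degree, and is really the same mechanism the paper already uses in the third case of Proposition~\ref{propStandardForm} (the Milnor-number-one computation via $\ker B_{t_0}\cap\overline{V}$), so your version makes the theorem's proof more coherent with the rest of the paper; what it costs you is that you lean, as the paper implicitly does too, on the standard dictionary that a hyperelliptic fiber with only nodes in the affine chart and non-dropping leading coefficient (here $1$ or $t_0\neq0$) is of Kodaira type $I_\nu$.

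One small inaccuracy: positive geometric genus does \emph{not} force $\det(A)\neq0$ --- Remark~\ref{rmkDet} only says that $\det(A)=0$ implies genus $0$ \emph{or} split after a Delsarte base change, and a split fibration can well have genus $1$. The correct way to discharge $\det(A)=0$ is that a split genus-$1$ fibration has constant $j$-invariant (and no singular fibers away from $0,\infty$), which your hypothesis excludes. This is easily repaired and does not affect the rest of the argument.
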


\begin{proof}
Without loss of generality we may assume the fibration is a Delsarte minimal fibration. In particular we have an affine equation for this fibration of the form described in the previous Proposition.

In the first case the fibration is isotrivial and therefore the $j$-invariant is constant, hence we may exclude this case. If we are in the third case then each singular fiber at $t=t_0$ is semistable and, in particular, is of type $I_\nu$.

It remains to consider the second case. In this case we have an affine equation of the form $y^a=f(x,t)$. Suppose first that $a>2$ holds. Then the generic fiber has an automorphism of order $a$ with fixed points. This implies that the $j$-invariant of the generic fiber is  either $0$ or $1728$. In particular, the $j$-invariant is constant and that the fibration is isotrivial. Hence we may assume  $a=2$. In this case we have an affine equation $y^2=f(x,t)$. Without loss of generality we may assume  $x^2\nmid f$. Since the generic fiber has genus $1$ it follows that $\deg_x(f)\in \{3,4\}$. Since $S$ is a Delsarte surface it follows that $f$ contains three monomials.

Suppose first that $\deg_x(f)=3$ and that at $t=t_0$ there is a singular fiber of type different from $I_v$. Then $f(x,t_0)$ has a triple root, i.e., $f(x,t_0)=(x-t_0)^3$. This implies that $f(x,t_0)$ consists of either one or four monomials in $x$. This contradicts the fact that $f(x,t)$ consists of three monomials and $t_0\neq 0$.

If $\deg_x(f)=4$ then we may assume (after permuting coordinates, if necessary) that $f=x^4+x^a+t$  or $f=x^4+tx^a+1$.
If the fiber type at $t=t_0$ is different from $I_v$ then $f(x,t_0)$ consists of three monomials and  $y^2=f(x,t_0)$ has at singularity different from a node. In particular, $f(x,t_0)$ has a zero or order at least 3 and therefore $f(x,t_0)=(x-a)^4$ or  $f(x,t_0)=(x-a)(x-b)^3$. In the first case $f(x,t_0)$ contains five monomials, contradicting the fact that is has three monomials. In the second case note that the constant coefficient of $f(x,t_0)$ is nonzero and hence  $ab\neq 0$. Now either  the coefficient of $x$ or of $x^3$ is zero. From this it follows that either $b=-3a$ or $a=-3b$ holds. Substituting this in $f(x,t_0)$ and the the fact that $f(x,t_0)$ has at most three monomials yields $b=0$, contradicting  $ab\neq 0$.
\end{proof}

\begin{corollary}\label{corSingFib} Let $\varphi: S\to \Ps^1$ be an elliptic Delsarte surface, then there exists a cyclic base change of $\varphi$ ramified only at 0 and $\infty$
 that is isomorphic to a cyclic base change, ramified only at $0$ and $\infty$, of an elliptic surface with at most one singular fiber away from 0 and $\infty$ and this fiber is of type $I_v$.
\end{corollary}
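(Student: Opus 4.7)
The plan is to combine Proposition~\ref{propBaseChangeOneSingFib} with Theorem~\ref{thmSingFib}. First I would apply Proposition~\ref{propBaseChangeOneSingFib} (with $g=1$) to $\varphi$: since $\varphi$ is elliptic, its generic fiber has positive geometric genus, and by Remark~\ref{rmkDet} this forces $\det(A)\neq 0$, so the hypothesis of the proposition is met. We obtain a Delsarte base change $S_n\to\Ps^1$ of $\varphi$ that is isomorphic to a base change of a genus one fibration $\psi:S_0\to\Ps^1$ with at most one singular fiber away from $0,\infty$. By Definition~\ref{defBaseChange}, both base-change maps are cyclic and ramified only over $0,\infty$, which already matches the combinatorial shape demanded by the statement.

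Next I would verify that $\psi$ is actually an elliptic surface, i.e.\ that it admits a section. The section of $\varphi$ pulls back to a section of $S_n$; since $S_n\to S_0$ is a cyclic Galois cover étale away from $0,\infty$, Galois descent on the generic fiber yields a rational section of $\psi$, which extends to an honest section after minimal resolution. Now suppose $\psi$ has a singular fiber over some $t_0\neq 0,\infty$. Because both base-change maps $S_n\to S_0$ and $S_n\to S$ are étale at $t_0$, Kodaira types are preserved under pullback in both directions, and the fiber of $\psi$ at $t_0$ has the same type as a singular fiber of $\varphi$ at some $t_1\neq 0,\infty$. Theorem~\ref{thmSingFib} identifies the latter as a fiber of type $I_\nu$, which gives the corollary.

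The main obstacle is that Theorem~\ref{thmSingFib} requires $\varphi$ to have nonconstant $j$-invariant, so the argument above technically covers only that case. For the constant $j$-invariant case, one invokes the classification of Proposition~\ref{propStandardForm}: $\varphi$ is isotrivial (either $m_4$ coincides with one of $m_1,m_2,m_3$, or we are in the $y^a=f(x,t)$ case with $a>2$), and a separate argument using this isotrivial structure---together with a further cyclic base change ramified only at $0,\infty$---is needed to arrange that $\psi$ has no singular fiber outside $\{0,\infty\}$ at all, so that the conclusion holds vacuously. This isotrivial sub-case is the only delicate point; the generic (nonconstant $j$-invariant) case, which is the one relevant for the applications to Fastenberg's list, reduces directly to the two earlier results.
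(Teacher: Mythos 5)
Your main line of argument---apply Proposition~\ref{propBaseChangeOneSingFib} to reduce to at most one singular fiber away from $0,\infty$, then use Theorem~\ref{thmSingFib} together with the fact that both base-change maps are \'etale away from $0,\infty$ (so Kodaira types transfer between $\varphi$, $S_n$ and $S_0$) to identify that fiber as type $I_\nu$---is exactly the derivation the paper intends; the corollary is stated without proof immediately after Theorem~\ref{thmSingFib}. Two small inaccuracies in your first paragraph: Remark~\ref{rmkDet} does \emph{not} say that positive genus forces $\det(A)\neq 0$; when $\det(A)=0$ the fibration can still be elliptic but becomes split after a Delsarte base change (Lemma~\ref{lemProd}), a case that satisfies the corollary vacuously and should be disposed of separately rather than excluded. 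The discussion of sections and Galois descent is extra relative to what the paper does, but harmless.

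The genuine problem is the constant-$j$ case, which you correctly flag but then propose to repair in a way that cannot work. You suggest that a further cyclic base change ramified only at $0,\infty$ can be used to arrange that there are no singular fibers outside $\{0,\infty\}$. But such a base change is \'etale over every $t_0\neq 0,\infty$, so it preserves the fiber type there: for instance $y^2=x^3+1+t$ is an isotrivial elliptic Delsarte fibration with an additive fiber of type $II$ at $t=-1$, and this fiber survives every base change ramified only at $0,\infty$ and likewise descends to any fibration of which the given one is such a base change. So the conclusion ``this fiber is of type $I_\nu$'' genuinely fails for isotrivial Delsarte fibrations, and the corollary must be read with the nonconstant $j$-invariant hypothesis carried over from Theorem~\ref{thmSingFib} (consistent with its only application in the paper, to a fibration with an $I_1^*$ fiber and nonconstant $j$). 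With that hypothesis added, your first two paragraphs constitute a complete proof along the paper's intended route; without it, no argument of the kind you sketch can close the gap.
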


Let $\pi:E\to \Ps^1$ be an elliptic surface (with section). Define $\gamma(\pi)$ to be
\[ \gamma(\pi):=\sum_{t\neq0,\infty} \left( f_t-\frac{e_t}{6}\right)-\frac{n_0}{6}-\frac{n_\infty}{6},\]
where $f_t$ is the conductor of $\pi^{-1}(t)$, $e_t$ the Euler number of $\pi^{-1}(t)$ and $n_p$ is zero unless the fiber at $p$ is of type $I_n$ or $I_n^*$ and in this cases $n_p=n$.

In \cite{FastExtremal} and \cite{FastLongTable} Fastenberg studies rational elliptic surfaces with $\gamma<1$. She determines the maximal Mordell-Weil rank of such elliptic surfaces under cyclic base changes of the form $t\mapsto t^n$.

We will now show that each Delsarte fibration of genus 1 with nonconstant $j$-invariant becomes after a Delsarte base change  the base change of a rational elliptic surface with $\gamma<1$. In particular, the maximal Mordell-Weil ranks for Delsarte fibrations of genus 1 under cyclic base change (as presented in \cite[§3.4]{HeijnePhD} and \cite{HeijneMoC}) can also be obtained from \cite{FastLongTable}.
  
\begin{corollary}\label{corGamma} Let $\pi: S\to \Ps^1$ a minimal Delsarte fibration of genus $1$ with nonconstant $j$-invariant. Then $S$ is the base change of a rational elliptic surface with $\gamma<1$.
 \end{corollary}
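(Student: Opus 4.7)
The plan is to combine Corollary~\ref{corSingFib} with a direct computation of Fastenberg's $\gamma$. First apply Corollary~\ref{corSingFib} to $\pi$: after a cyclic base change ramified only at $0$ and $\infty$, the fibration becomes a cyclic base change of an elliptic surface $\pi_0 : S_0 \to \Ps^1$ with at most one singular fiber away from $\{0, \infty\}$, that exceptional fiber (if any) being of Kodaira type $I_\nu$ with $\nu \geq 1$ by Theorem~\ref{thmSingFib}. It therefore suffices to exhibit $\pi_0$ as a rational elliptic surface with $\gamma(\pi_0) < 1$.

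For rationality, observe that $S_0$ is constructed in the proof of Proposition~\ref{propBaseChangeOneSingFib} as the birational quotient $S/\langle \sigma \rangle$, with $\sigma$ the automorphism supplied by Lemma~\ref{lemBaseAuto}. Because $\pi$ is a minimal Delsarte fibration, Remark~\ref{remMinFib} gives $K(S) = K(x, y)$, so $S$ is rational. In characteristic zero, the quotient of a rational surface by a finite group action is rational (pluricanonical forms on the quotient pull back injectively into the invariant subspace on the cover, which vanishes), so $S_0$ is rational and its minimal elliptic model is a rational elliptic surface, inheriting a section from $\pi$.

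For $\gamma(\pi_0) < 1$, plug directly into the definition. If $\pi_0$ has no singular fiber away from $\{0, \infty\}$, the sum is empty and
\[\gamma(\pi_0) = -\frac{n_0 + n_\infty}{6} \leq 0 < 1.\]
If there is a single such fiber at $t_0$ of type $I_\nu$ with $\nu \geq 1$, then $f_{t_0} = 1$ and $e_{t_0} = \nu$, so
\[\gamma(\pi_0) = 1 - \frac{\nu}{6} - \frac{n_0 + n_\infty}{6} \leq \frac{5}{6} < 1,\]
which completes the chain.

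The step I expect to be most delicate is the passage from the birational quotient $S/\langle \sigma \rangle$ to a genuine smooth minimal rational elliptic surface on which Fastenberg's $\gamma$ is defined; in particular one must check that the fiber identified as $I_\nu$ at $t_0$ survives minimization (it does, since $I_\nu$ with $\nu \geq 1$ is already relatively minimal) and that the resulting fibration carries a section, so that the above computation really produces the Fastenberg invariant of a rational elliptic surface to which her tables apply.
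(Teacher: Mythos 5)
Your proposal is correct and follows essentially the same route as the paper: reduce via Theorem~\ref{thmSingFib}/Corollary~\ref{corSingFib} to a fibration with at most one singular fiber away from $0,\infty$, of type $I_\nu$, and then substitute $f_{t_0}=1$, $e_{t_0}=\nu\geq 1$ into the definition of $\gamma$ to get $\gamma\leq 5/6<1$. The only differences are cosmetic: you additionally verify rationality of the quotient surface (which the paper leaves implicit, and which is a worthwhile check given that the statement asserts a \emph{rational} elliptic surface), while the paper instead observes that the nonconstant $j$-invariant forces at least three singular fibers and hence exactly one away from $0,\infty$, so your first case (empty sum) is vacuous under the hypotheses.
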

\begin{proof}
 From Theorem~\ref{thmSingFib} it follows that $\pi$ is the base change of an elliptic fibration $\pi':S'\to \Ps^1$ with at most one singular fiber away from $0$ and $\infty$ and this fiber is of type $I_v$.
 Since the $j$-invariant is nonconstant it follows that $\pi'$ has at least three singular fibers, hence there is precisely one singular fiber away from $0$ and $\infty$.
  Since this fiber is of type $I_\nu$ it follows that   $f_t=1$ for this fiber. Hence
\[ \gamma=1-\frac{e_t+n_0+n_\infty}{6}<1.\]
\end{proof}

\begin{remark} The converse statement to this results holds also true: let $\pi:S\to \Ps^1$ be a rational elliptic surface with $\gamma<1$, only one singular fiber away from $0$ and $\infty$ and this fiber is of type $I_\nu$. Then there exists a base change of the form $t\mapsto t^n$ such that the pullback of $\pi:S\to\Ps^1$ along this base change is birational to the standard fibration on a Delsarte surface. One can obtain this result by comparing the classification of elliptic Delsarte surfaces from \cite[Chapter 3]{HeijnePhD} with the tables in \cite{FastExtremal} and \cite{FastLongTable}.
\end{remark}

\begin{example}
According to \cite{FastLongTable} there is an elliptic surface with a $IV$-fiber at $t=0$, an $I_1$-fiber at $t=\infty$ and one further singular fiber that is of type $I_1^*$, such that the maximal rank under base changes of the form $t\mapsto t^n$ is $9$. Such a fibration has a nonconstant $j$-invariant. Corollary~\ref{corSingFib} now implies that this fibration is not a Delsarte fibration.

If we twist the $I_1^*$ fiber and one of the fibers at $t=0$ or $t=\infty$ then we get the following fiber configurations $IV;I_1^*;I_1$ or $II^*;I_1;I_1$. Then maximal rank under base changes of the form $t\mapsto t^n$ equals 9 in both cases.  Now $y^2=x^3+x^2+t$ has singular fibers of type $I_1$ at $t=0$ and $t=-4/27$ and of type $II^*$ at $t=\infty$ and $y^2=x^3+tx+t^2$ has a $IV$-fiber at $t=0$, a $I_1$ fiber at $t=-4/27$ and a $I_1^*$ fiber at $t=\infty$. Hence both fibration occur as Delsarte fibrations.
\end{example}

\begin{example}
Consider the elliptic Delsarte surface that corresponds to
\[Y^2=X^3+X^2+tX.\]
We can easily compute the discriminant and $j$-invariant of  this fibration:
\[ \Delta=-64t^3-16t^2   \mbox{ and } j=256\frac{(3t-1)^3}{4t^3-t^2}.\]
From this we can see that there are three singular fibres. Over $t=0$ there is $I_2$-fiber, over $t=\infty$ there is a $III$-fiber and over $t=-1/4$ there is a $I_1$-fiber.
We then check that this corresponds to the second entry in the list of  \cite{FastLongTable}.
\end{example}

\begin{remark}\label{rmkDifference} The approaches to determine the maximal Mordell-Weil ranks under cyclic base change in \cite{FastLongTable} and in \cite{HeijnePhD} are quite different. The former relies on studying the local system coming from the elliptic fibration, whereas the latter purely relies on Shioda's algorithm to determine Lefschetz numbers of Delsarte surfaces. This explains why Fastenberg can deal with several base changes where the ``minimal" fibration has four singular fibers (which cannot be covered by Shioda's algorithm because of Proposition~\ref{propBaseChangeOneSingFib}) but cannot deal with fibration with constant $j$-invariant. Instead Shioda's algorithm can handle some of them.
\end{remark}

\section{Isotrivial fibrations}\label{secIsoTrivial}
Using Proposition~\ref{propStandardForm} one easily describes all  possible isotrivial minimal Delsarte fibrations.

\begin{proposition} Suppose the standard fibration on $S$ is isotrivial and that the genus of the generic fiber is positive. Then there is a Delsarte base change and a Delsarte birational map such that the pull back of the standard fibration is of the form $m_1+m_2+(1+t^n)m_3$, $y^3+x^3+x^2+t^n$, or $y^a+x^2+x+t^n$. 
\end{proposition}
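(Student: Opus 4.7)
Plan. By Lemma~\ref{lemOneT}, after a Delsarte base change and a Delsarte birational map I may assume the affine equation of the standard fibration is $m_1+m_2+m_3+t^n m_4$ with each $m_i$ a monomial in $x,y$. Setting $s=t^n$, this fibration is the pullback along the finite map $t\mapsto t^n$ of the minimal Delsarte fibration $m_1+m_2+m_3+s m_4=0$, which is itself isotrivial since isotriviality passes through finite base changes in both directions. I then apply Proposition~\ref{propStandardForm} to the minimal fibration.

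In the first case of Proposition~\ref{propStandardForm} ($m_4$ coincides with some $m_i$), relabeling gives the minimal equation $m_1+m_2+(1+s)m_3=0$; in the variable $t$ this reads $m_1+m_2+(1+t^n)m_3$, the first asserted form. The third case (every singular fiber away from $0,\infty$ is semistable) is incompatible with isotriviality of positive genus: semistable singularities carry unipotent local monodromy, while an isotrivial family of positive genus has local monodromy landing in a finite subgroup of $\Aut$ of the generic fiber; a finite-order unipotent element is trivial, and trivial monodromy around a nodal fiber forces the fiber to be smooth. Yet in this case all $k_i$ are nonzero, so by Proposition~\ref{propDiscrim} there are genuine singular fibers at $s_0^{k_4}=\prod k_i^{k_i}\neq 0$, a contradiction.

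In the remaining (second) case, after a further Delsarte birational map absorbing the $x$-factor of the $t$-monomial into $y$, the equation becomes $y^a=x^B+x^C+s$ with $B>C>0$. The generic fiber is a cyclic $a$-fold cover of $\mathbb{P}^1_x$ branched at the $B$ zeros of $f_s(x)=x^B+x^C+s$ and also at $\infty$ whenever $a\nmid B$. Isotriviality forces the branch divisor, together with its local character data, to be $\PGL_2$-equivalent for all $s$. Since any three distinct points of $\mathbb{P}^1$ are $\PGL_2$-equivalent while four or more points have nontrivial cross-ratios, the isotrivial cases are precisely those with at most three branch points: either $B=2$ (then $C=1$ and positive genus forces $a\geq 3$, giving $y^a=x^2+x+s$), or $B=3$ with $\infty$ unramified, i.e.\ $a\mid 3$ so $a=3$ (giving $y^3=x^3+x^C+s$ with $C\in\{1,2\}$; a Delsarte birational $x\mapsto x^{-1}$ combined with a suitable rescaling and auxiliary base change shows the two choices are equivalent, so we may take $C=2$). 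Putting $s=t^n$ yields the last two asserted forms.

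The main obstacle is the exclusion of the four-or-more-branch-point subcases by showing such families are not isotrivial. This amounts to computing a continuous invariant (for instance, an affine-normalized coordinate of a fourth root after fixing two roots of $f_s$ at $0$ and $1$, augmented by the character data at $\infty$) and verifying via the Vieta relations for $f_s=x^B+x^C+s$ that it varies with $s$ whenever $B\geq 4$, or whenever $B=3$ and $a\neq 3$; in small cases this can be done directly using the $j$-invariant.
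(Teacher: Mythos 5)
Your overall architecture is the same as the paper's: reduce to a minimal Delsarte fibration via Lemma~\ref{lemOneT}, run through the three cases of Proposition~\ref{propStandardForm}, dispose of the semistable case by monodromy, keep the first case verbatim, and in the second case read off isotriviality from the constancy of the moduli of the branch divisor of the cyclic cover. However, two steps in your treatment of the second case have genuine gaps. First, the normalization ``absorbing the $x$-factor of the $t$-monomial into $y$'' to reach $y^a=x^B+x^C+s$ is not in general a Delsarte birational map: starting from $y^a=x^b+x^c+tx^d$ this requires extracting $x^{d/a}$, which is monomial only when $a\mid d$. The paper instead keeps the form $y^a=x^bf(x,t)$ with $f(0,t)\neq0$ and must carry the extra branch points at $x=0$ and $x=\infty$, with their multiplicities, through the whole analysis; these are not decorative, since the surviving case $y^a+x^2+x+t^n$ arises there precisely as $y^{b+2}=x^b(x^2+xz+tz^2)$ or $z^b(x^2+xz+tz^2)$, and identifying these two with each other requires an explicit auxiliary Delsarte base change (degree $b+2$), not just a coordinate change. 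Your enumeration of ``at most three branch points'' is therefore carried out on a normal form that not every case-$2$ surface attains, and the fact that your final list agrees with the correct one is not justified by the argument given.

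Second, and more seriously, the exclusion of the families with four or more branch points --- which you correctly identify as ``the main obstacle'' --- is exactly where the paper spends its effort, and you leave it as a description of a computation rather than a proof. A priori nothing prevents the cross-ratios of the roots of $x^B+x^C+s$ (together with $0$ and $\infty$ weighted by $b$ and $c$) from being constant in $s$ for some special exponents, and there are infinitely many candidate exponent patterns to rule out. The paper avoids an infinite case analysis by invoking the main result of \cite{KT}: an isotrivial pencil of this type with a bad member forces that member to have at most three distinct zeroes, and since $t=0$ is always bad (as $c'>1$ after normalization), this cuts the problem down to a short explicit list ($x^b(x^3+x^2z+tz^3)$, $z^c(x^3+x^2z+tz^3)$, $x^4+x^3z+tz^4$, $x^b(x^4+x^2+t)$, $x^5+x^3+t$, $x^bz^c(x^2+xz+tz^2)$, \dots), each of which is then killed by a concrete $j$-invariant or semistable-fiber computation. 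You need either this input or a uniform Vieta-type argument actually written out. A smaller point: in discarding the third case you cite Proposition~\ref{propDiscrim} to produce a singular fiber away from $0,\infty$, but that proposition only gives a necessary condition on the location of singular fibers, not their existence, so this step is also not yet closed as stated.
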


\begin{proof} Suppose the affine equation for $S$ is of  the third type of Proposition~\ref{propStandardForm}.  Then $S$ admits a semistable fiber and in particular the fibration cannot be isotrivial. 
If the affine equation for $S$ is of the first type of Proposition~\ref{propStandardForm}, then the generic fiber is (after an extension of the base field) isomorphic to $m_1+m_2+m_3$, in particular each two smooth fibers of the standard fibration are isomorphic and therefore this fibration is isotrivial. In this case $S$ is the pull back of $m_1+m_2+(1+t^n)m_3$.

Hence we may restrict ourselves to the case where we have an affine equation of the form $y^a=x^bf(x,t)$ where $f$ consists of three monomials, $f(0,t)$ is not zero and the  exponent of $x$ in each of the three monomials in $f$ is different.  Moreover, after a Delsarte birational map we may assume that $b<a$.

The surface $S$ is birational to a surface $y^a=x^bz^{c+\deg(f)}f(x/z,t)$ in $\Ps(1,w,1)$, with $0\leq c<a$ and $w=(b+c+\deg(f))/a\in \Z$.
The standard fibration on $S$ is isotrivial if and only if the moduli of the zero-set of  $x^bz^{c+\deg(f)}f(x/z,t)$ in $\Ps^1_{(x:z)}$ is independent of $t$. We will now consider this problem.

We cover first the case where $d':=\deg_x(f)>2$ holds. 

After swapping the role of $x$ and $z$, if necessary, we may assume  that the coefficient of $xz^{d'-1}$ is zero.
We claim that after a map of the form $y=t^{c_1}y, x=t^{c_2}x,z=z,t=t^{c_3}$ we may assume that  $f=x^{d'}+x^{c'}z^{d'-c'}+tz^{d'}$. To see this, take an affine equation for $S$ of the form 
$y^a=x^b(a_1x^{d'}+a_2x^{c'}+a_3)$, where $a_i\in\{1,t\}$, and two of the $a_i$ equal $1$. If $a_1=t$ then we need to take an integer solution of $ac_1=bc_2+d'c_2+c_3=bc_2+c'c_2$, if $a_2=t$ then we need to take an integer solution of $ac_1=bc_2+d'c_2+c_3=bc_2+c'c_2$. In both cases we obtain an affine equation of the form $y^a=x^b(x^{d'}+x^{c'}+t^n)$. This fibration is isotrivial if and only if $y^a=x^b(x^{d'}+x^{c'}+t)$ is isotrivial, which proves the claim.

Hence from now on we assume that $f$ is of the form $x^{d'}+x^{c'}z^{d'-c'}+tz^{d'}$ with $d'>2$ and $c'>1$.

Let $s$ denote the number of distinct zeroes of  $g(x,z):=x^bz^{c+\deg(f)}f(x/z,t)$ for a general $t$-value. 
We say that fiber at $t=t_0$ is bad if  $x^bz^{c+\deg(f)}f(x/z,t)$  has at most $s-1$ distinct zeroes.
The main result from \cite{KT} yields that if the fiber a $t=t_0$ is bad then $g(x,z)$ has at most $3$ distinct zeroes. We are first going to classify all $g$ satisfying this condition. Then we will check case-by-case whether the moduli of the zeroes of $g$ are independent of $t$.

Consider the fiber over $t=0$. From $c'>1$ it follows that  $x=0$ is a  multiple zero of $f(x,0)$. Hence that the fiber over $t=0$ is bad. If $c$ is positive then the criterion from \cite{KT} implies that $g(x,0)$ can have at most one further zero and hence $d'=c'+1$. If $c=0$ then $g$ can have at most two further zeroes and therefore $d'-c'\in \{1,2\}$.

Suppose first  $d'=c'+1$. 
Consider $f'(x,t):=\frac{\partial}{\partial x}f(x,t)$. Our assumption on $f$ implies that $f'(x,t)$ is a polynomial only in $x$. The fiber at $t=t_0$ is bad if and only if $f'(x,t_0)$ and $f(x,t_0)$ have a common zero. From $c'=d'-1$ it follows that $f'(x,t)$ has a  unique zero different from $0$, say $x_0$, and $x_0$ is a simple zero of $f'(x,t)$. Now $f(x_0,t)$ is a linear polynomial in $t$. Hence there is a unique nonzero $t$-value $t_0$ over  which there is a bad fiber. Since $x_0$ is a simple zero of $f'(x,t_0)$ it follows that $(x-x_0)^2$ divides $f(x,t_0)$ and that there are $d'-2$ further distinct zeroes, all different from $0$. Using that $g$ has at most $3$ zeroes it follows that  if both $b$ and $c$ are nonzero then $d'-2=0$, if one of $b,c$ is zero then $d'-2\leq 1$ and if both $b$ and $c$ are zero then $d'-2\leq 2$. Using that we assumed that $d'$ is at least $3$ we obtain the  following possibilities for $g$:
$x^b(x^3+x^2z+tz^3)$, $z^c(x^3+x^2z+tz^3)$, $x^3+x^2z+tz^3$ and $x^4+x^3z+tz^4$.

Suppose now  $c=0$ and $d'=c'+2$. Then $f'$ is of the form $\beta(x^2+\alpha)x^{d'-3}$. In particular, there are two possible $x$-values for a bad point in a bad fiber. If they occur in the same fiber and $b=0$ then $d'\in \{4,5\}$, otherwise $d'\in \{3,4\}$. Since $2\leq c'=d'-2$ we may exclude $d'=3$ and we obtain that the two polynomials $x^4+x^2+t$ and $x^5+x^3+t$ are the only possibilities for $f$. We can exclude $x^5+x^3+t$, since it has bad fibers at $t^2=\frac{-3125}{108}$ and a necessary condition to have $d'=5$ is that there is at most one bad fiber with $t\neq0,\infty$.

If $b>0$ then $d'\leq 4$; in particular we have only $x^b(x^4+x^2+t)$ to check.

Actually only in one of the above cases  the moduli are independent of $t$, namely $g=x^3+x^2z+tz^3$:

Note that  the $j$-invariants of the elliptic curves $y^2=x^3+x^2+t$ and  $y^2=tz^3+z+1$ are not constant, hence the moduli of the zeros of $x^b(x^3+x^2z+tz^3)$ and of $z^c(x^3+x^2z+tz^3)$ depend on $t$ (if $b>0$ resp. $c>0$ holds). Since $x^3+x^2z+tz^3$ has degree 3, the moduli of its zeroes are obviously constant. 

The family of genus one curves $y^2=x^4+x^2+t$ has a semistable fiber at $t=\frac{1}{4}$ and the family of genus one curves  $y^2=x^4+x^3z+tz^4$ has a semistable fiber at $t=\frac{27}{256}$. Hence the moduli of the zeroes of  $x^b(x^4+x^2+t)$ for $b\geq 0$ and of $x^4+x^3z+tz^4$ depend on $t$.

Consider now the final case $d'=2$. Then $f=x^2+x+t$, and therefore automatically two of the three possibilities for $g$, namely $z^c(x^2+xz+tz^2)$ and $x^b(x^2+xz+tz^2)$ have constant moduli since they define 3 points in $\Ps^1$. 
Now $y^{b+2}=z^b(x^2+xz+tz^2)$ and $y^{b+2}=x^b(x^2+xz+tz^2)$ are birationally equivalent up to a Delsarte base change, e.g., take $((x:y:z),t)\mapsto ((z:\frac{y}{t}:\frac{x}{t^{b+2}}),t^{b+2})$. Hence these two cases yield only one case up to isomorphism. We may assume that the affine equation equals $y^b+x^2+x+t^n$.
If $b=c=0$ holds, then the generic fiber is a cyclic cover of $\Ps^1$ ramified at two points, and in particular has genus 0. Hence we can exclude this case.
Finally,  $x^bz^c(x^2+xz+tz^2)$ does not have constant moduli since the $j$-invariant of $y^2=x^3+x^2+tx$ is nonconstant.
\end{proof}

\begin{remark} In the case of $y^a+x^2+x+t$ we may complete the square. This yields a surface that is  isomorphic to $y^a+x^2+1+t$, in particular the fibration is birationally equivalent to a fibration of the first kind. However, they are not Delsarte birational.

In \cite[Section 3.5.1]{HeijnePhD} it is shown that $y^3+x^3+x^2+t$ is birational to $y^2+x^3+t^3+1$, however the given birational map is not a Delsarte birational map.

Hence both exceptional case are fibration that are birational to a fibration of the first type.
\end{remark}

From the previous discussion it follows that almost all minimal isotrivial  Delsarte fibrations  are of the form $m_1+m_2+(1+t)m_3$.
 
We will calculate the Picard numbers for one class of such fibration and consider the behavior of the Picard number under Delsarte base change, i.e., base changes of the form $t\mapsto t^a$.

\begin{example}\label{exaIso}
Let $p=2g+1$ be a prime number.
 Consider the isotrivial fibration $y^2=x^{p}+t^{2ap}+s^{2ap}$ of genus $g$-curves over $\Ps^1_{(s:t)}$.   This equation defines a quasi-smooth surface $S$ of degree $2ap$ in $ \Ps(2a,ap,1,1)$. The surface $S$ has one singular point, namely at $(1:1:0:0)$. A single blow-up of this suffices to obtain a smooth surface $\tilde{S}$. The Lefschetz number of $\tilde{S}$ can be computed by using Shioda's algorithm, which we do below. The exceptional divisor of $\tilde{S}\to S$ is a smooth rational curve. In particular, using the Mayer-Vietoris sequence one easily obtains that $h^2(\tilde{S})=h^2(S)+1$ and $\rho(\tilde{S})=\rho(S)+1$.
 Since $S$ is quasi-smooth one has a pure weight 2 Hodge structure on $H^2(S)$. To determine the Hodge numbers of this Hodge structure we use a method of Griffiths and Steenbrink. Note first that $\dim H^2(S)_{\prim}=h^2(S)-1=h^2(\tilde{S})-2$. 
 
   Let $R$ be the Jacobian ring of $S$, i.e.,
   \[ R=\C[x,y,t,s]/\left(\frac{\partial f}{\partial x},\frac{\partial f}{\partial y},\frac{\partial f}{\partial s},\frac{\partial f}{\partial t}\right)=\C[x,y,s,t]/(x^{p-1},y,t^{2p-1},s^{2p-1})\]
 This is a graded ring with weights $(2a,ap,1,1)$. Let $d=2ap$ be the degree of $S$ and $w=ap+2a+2$ the sum of the weights.

 From Griffiths-Steenbrink \cite{SteQua} it follows that 
$H^{2-q,q}(S)_{\prim}$ is isomorphic with
\begin{eqnarray*} R_{qd-w}&=&\spa \{ x^it^js^k\mid 2ai+j+k=qd-w, 0\leq i<p-1, 0\leq j,k< 2p-1\}\\ &=& \spa\{ yx^it^js^k \mid  2ai+j+k=qd, 0< i\leq p-1, 0<j,k\leq 2p-1\}. \end{eqnarray*}
In other words the basis elements of $R_{qd-w}$ correspond one-to-one to vectors
\[\left\{ \left( \frac{1}{2},\frac{i}{p},\frac{j}{2ap},\frac{k}{2ap}\right) \in (\Q/\Z)^4 \left| \begin{array}{c} i,j,k\in \Z;0<i<p; 0<j,k<2ap; \\  \frac{1}{2}+\frac{i}{p}+\frac{j}{2ap}+\frac{k}{2ap}=q\end{array} \right.  \right\}.\]
 
In \cite[Section 2.1]{HeijnePhD} a variant of Shioda's algorithm \cite{ShiodaPic}  is presented. This algorithm calculates the Lefschetz number of a resolution of singularities of a Delsarte surface in $\Ps^3$. In our case we apply this algorithm to the surface $T\subset \Ps^4$ given by
\[-Y^2Z^{2ap-2}+X^pZ^{2ap-p}+W^{2ap}+Z^{2ap}\]
Since the Lefschetz number is a birational invariant, one has that the Lefschetz number of $\tilde{S}$ and $\tilde{T}$ coincide.

Following \cite{HeijnePhD} we need to take the exponent matrix
\[A=
\left(
\begin{array}{cccc}
0&2&0&2ap-2\\
p&0&0&(2a-1)p\\
0&0&2ap&0\\
0&0&0&2ap\\
\end{array}
\right)
\]
and then to determine the three vectors \[\bv_1:=A^{-1}(1,0,0,-1)^T, \bv_2:=A^{-1}(1,0,0,-1)^T\mbox{ and }\bv_3:=A^{-1}(0,0,1,-1)^T.\]
In our case this yields the vectors \[\bv_1=\left(0,\frac{1}{p},0,\frac{-1}{p}\right) ,  \bv_2=\left(\frac{1}{2},0,0,\frac{-1}{2}\right)\mbox { and }\bv_3=\left(0,0,\frac{1}{2ap},\frac{-1}{2ap}\right).\]
 
Consider now the set $L:=i\bv_1+k\bv_2+j\bv_3\in \Q/\Z$. This are precisely the vectors of the form
\[\left\{\left( \frac{k}{2},\frac{i}{p},\frac{j}{2ap},\frac{-apk-2ai-j}{2ap} \right)\in (\Q/\Z)^4\left| i,j,k\in \Z\right.\right\} \]

Let $L_0\subset L$ be the set of vectors $v\in L$ such that none of the entries of $v$ equals $0$ modulo $\Z$, i.e.,
\[ \left\{\left( \frac{1}{2},\frac{i}{p},\frac{j}{2ap},\frac{-ap-2ai-j}{2ap} \right)\in (\Q/\Z)^4\left| \begin{array}{l} i,j\in \Z, 0<i<p,0<j<2ap, \\ j \not \equiv -ap-2ai \bmod 2ap \end{array}\right.\right\} \]
Note that $\#L_0$ is precisely $h^2(S)_{\prim}$.

For an element $\alpha\in \Q/\Z$ denote with $\fr{\alpha}$ the fractional part, i.e., the unique element $\beta\in \Q\cap [0,1)$ such that $\alpha-\beta\equiv 0 \bmod \Z$ and with $\ord_{+}(\alpha)$ the smallest integer $k>0$ such that $k\alpha\in \Z$.

Define the following $\Lambda\subset L_0$  consisting of  elements $(\alpha_1,\alpha_2,\alpha_3,\alpha_4)\in L_0$ such that there is a $t\in \Z$ for which $\ord_+(\alpha_kt)=\ord_+(\alpha_k)$ holds for $k=1,2,3,4$ and  $\fr{t\alpha_1}+\fr{t\alpha_2}+\fr{t\alpha_3}+\fr{t\alpha_4}\neq 2$. The condition $\ord_+(\alpha_kt)=\ord_+(\alpha_k)$ for $k=1,2,3,4$ is equivalent with $t$ being invertible modulo $2a'p$, where $a'=a/\gcd(a,j)$. 
Then the Lefschetz number $\lambda=h^2(\tilde{T})-\rho(\tilde{T})$ equals $\#\Lambda$. 

Since $\lambda(\tilde{S})=\lambda(\tilde{T})$ and $h^2(\tilde{S})=2+\# L_0$ it follows that $\rho(S)$ equals
\[2+\# \left\{ (\alpha_1,\alpha_2,\alpha_3,\alpha_4)\in L_0 \mid\begin{array}{l} \frd{t\alpha_1}+\frd{t\alpha_2}+\frd{t\alpha_3}+\frd{t\alpha_4}=2 \mbox{ for } t\in \Z \\\mbox{ such that }  \ord_{+}(t\alpha_k)=\ord_{+}(\alpha_k), k=1,2,3,4\end{array}\right \}.\]
We now determine this set.

Consider now a vector $\bv$ from $L_0$, i.e., a vector
\[ \left(\frac{1}{2},\frac{i}{p},\frac{j}{2ap},\frac{ap-2ai-j}{2ap}\right) \]
with $i,j\in \Z$, $i\nequiv 0 \bmod p, j\nequiv 0 \bmod 2ap,  ap-2api-j \nequiv 0 \bmod 2ap$.

Take $t\in \{1,\dots, 2a'p-1\}$ such that $\gcd(t,2a'p)= 1$ and $t\equiv i^{-1} \bmod p$. Then $v\in \Lambda$ if and only if $t\bv \in \Lambda$. Hence to determine whether a vector is in $\Lambda$ it suffices to assume $i\equiv 1 \bmod p$.

Suppose now that $p>7$. 
In Lemma~\ref{lemExcl} we show that $\bv\not \in \Lambda$ if and only if the fractional part $\fr{\frac{j}{2ap}}$ is in the set $ \left\{ \frac{p-1}{2p}, \frac{1}{2}, \frac{p+2}{2p},\frac{2p-4}{2p}, \frac{2p-2}{2p}, \frac{2p-1}{2p}\right\}$.  Each of the six values for $j$ yields $(p-1)$ elements in $L_0\setminus \Lambda$, hence $\rho(\tilde{S})=2+6(p-1)$.

One can easily find several divisors on $\tilde{S}$. We remarked in the introduction that the pull back of the hyperplane class and the exception divisor yield two independent classes in $NS(\tilde{S})$. We give now $2(p-1)$ further independent classes:
Let $\zeta$ be $p$-th root of unity. Let $C_{1,i}$ be $x=t^{2a}\zeta^i$, $y=s^{ap}$, $C_{2,i}$ be $x=s^{2a}\zeta^i$, $y=t^{ap}$. Then $\sum_i C_{1,i}$ and $\sum C_{2,i}$ equal the hyperplane class in $Pic(S)$. However each $C_{i,j}$ is nonzero in $\Pic(S)$: Let $\sigma$ be the automorphism of $S$ sending $t$ to $t$ times a $2ap$-th root of unity and leaving the other coordinates invariant. Then the characteristic polynomial of $\sigma$ on the image of $C_{1,i}$, $i=1,\dots,p-1$ is $(t^p-1)/(t-1)$. In particular the image of $\spa \{[C_{1,j}]\}$ is either 0 or $p-1$ dimensional. (One can exclude the former possibility by checking intersection numbers.)
 Similarly using the automorphism $s$ is mapped to $s$ times a $2ap$-th root of unity it follows that $\spa\{[C_{2,i}]\}$ has dimension $p-1$ and that $\spa\{ [C_{i,j}]\}$ is $2(p-1)$ dimensional.
\end{example}

\begin{lemma}\label{lemExcl} Suppose $p>7$.
Let
\[\bv=\left( \frac{1}{2},\frac{1}{p},\frac{j}{2ap},\frac{-2a-ap-j}{2ap} \right)\in (\Q/\Z)^4\]
 such that $j\nequiv 0 \bmod 2ap, 2a+j+ap\nequiv 0 \bmod 2ap$.
Then $\bv\not \in\Lambda$ if and only if 
\[ \frac{j}{2ap} \in\left\{\frac{p-1}{2p}, \frac{1}{2},\frac{p+2}{2p}, \frac{2p-4}{2p},\frac{2p-2}{2p},\frac{2p-1}{2p}\right\}.\]
\end{lemma}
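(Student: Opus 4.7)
The plan is to convert $\bv \not\in \Lambda$ into a transparent interval constraint on $\beta := j/(2ap)$, verify sufficiency by direct substitution at the six listed values, and establish necessity by ruling out every other $\beta$. Because $\alpha_1+\alpha_2+\alpha_3+\alpha_4 \equiv 0 \bmod \Z$ and $\bv \in L_0$, the sum $\sum_k \{t\alpha_k\}$ is an integer in $\{1,2,3\}$ for every valid $t$, so $\bv\not\in\Lambda$ means this sum equals $2$ for all $t \in (\Z/2a'p\Z)^*$. Since such $t$ must be odd, $\{t\alpha_1\}=\tfrac12$, and $\alpha_3+\alpha_4 \equiv -\tfrac12-\tfrac1p$ yields $\{t\alpha_3\}+\{t\alpha_4\}=1-\{t\beta+t/p+\tfrac12\}$. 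A short case split on where $\{t\beta\}+\{t/p\}+\tfrac12$ falls modulo $\Z$ gives the clean equivalence
\[ \textstyle \sum_k \{t\alpha_k\} = 2 \iff \{t\beta\}+\{t/p\} \in [\tfrac12, \tfrac32). \]

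For sufficiency I would substitute each of the six values into this interval condition; the computations are uniformly short. The value $\beta=\tfrac12$ makes $\{t\beta\}=\tfrac12$ constant, giving sum $\tfrac12+r/p \in (\tfrac12,\tfrac32)$ where $r=t \bmod p$; the value $\beta=(p-1)/p$ makes the sum identically equal to $1$; and each of $\beta \in \{(p+2)/(2p), (p-2)/p, (p-1)/(2p), (2p-1)/(2p)\}$ reduces to a two-case calculation indexed by whether $r>p/2$, in both subcases placing the sum in $(\tfrac12,\tfrac32)$.

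For necessity I would first apply $t=1$ to obtain $\beta \geq (p-2)/(2p)$, with strict inequality since $\alpha_4 \neq 0$ forces $j \neq a(p-2)$. Next I would show that $a\mid j$, equivalently $a'=1$: if $a'>1$, then for each residue $r \bmod p$ there are $\phi(2a')\geq 2$ valid $t \equiv r \bmod p$, whose values $\{t\beta\}$ are spread out by multiples of $1/(2a')$ along the grid of units mod $2a'p$; pairing this spread with the interval constraints over several different $r$'s produces a contradiction once $p>7$. Granting $\beta=m/(2p)$ with $m\in\{1,\dots,2p-1\}$, and writing $t_r$ for the unique odd lift of $r$ in $(\Z/2p\Z)^*$, the interval condition becomes
\[ (t_r m \bmod 2p)+2r \in [p, 3p) \quad\text{for every } r\in\{1,\dots,p-1\}. \]
I would split by $\gcd(m,2p)\in\{1,2,p\}$ and test at the residues $r=1,2,3$ together with the conjugates $p-1,p-2,p-3$ (six distinct values since $p>7$), eliminating every $m$ outside the six distinguished ones.

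The main obstacle is this final enumeration, which requires careful bookkeeping to ensure that for $p>7$ every other $m$ is pinned down as failing at some small $r$. The bound $p>7$ is sharp: for $p \in \{3,5,7\}$ the residues $rm \bmod p$ do not spread out fast enough and extra values of $\beta$ survive, consistent with the remark in the introduction that the Picard number depends on $a$ in Shioda's example $p=3$ and the cases $p=5, 7$.
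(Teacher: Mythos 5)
Your reformulation of $\bv\notin\Lambda$ as the condition $\{t\beta\}+\{t/p\}\in(\tfrac12,\tfrac32)$ for all admissible odd $t$, where $\beta=j/(2ap)$, is correct and is essentially the same reduction the paper works with, and your sufficiency check for the six listed values would go through (the paper does it more quickly by noting that two coordinates of $\bv$ either both equal $\tfrac12$ or differ by exactly $\tfrac12$). The gaps are both in the necessity direction. First, the elimination of the case $a'>1$ is only asserted, not proved: the claim that the $\phi(2a')$ values of $\{t\beta\}$ within a fixed residue class mod $p$ ``spread out'' enough to hit a forbidden arc is precisely the hard point. The paper needs an explicit construction of $t$ when some prime $\ell\geq 5$ divides $a$, a table of roughly thirty congruence cases when $a=2^{v_2}3^{v_3}>1$, and separate hand checks for $(a,p)\in\{(3,11),(3,13),(3,17)\}$. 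Note for instance that when $2a'=4$ the two available values of $\{t\beta\}$ in a residue class differ by exactly $\tfrac12$ while each forbidden arc has length $|\tfrac12-r/p|<\tfrac12$, so no single residue yields a contradiction and a genuine multi-residue argument is unavoidable.

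Second, and more seriously, your final enumeration for $a'=1$ fails as stated: testing only $r\in\{1,2,3,p-1,p-2,p-3\}$ does not eliminate every $m$ outside the six distinguished ones. Take $p=31$ and $m=37$: the quantities $(t_r\cdot 37\bmod 62)+2r$ for $r=1,2,3,30,29,28$ are $39,47,55,85,77,69$, all inside $(31,93)$, yet $37\notin\{30,31,33,58,60,61\}$. This $m$ is only eliminated at $t=7$, where $\{7\cdot\tfrac{37}{62}\}+\tfrac{7}{31}=\tfrac{11}{62}+\tfrac{14}{62}=\tfrac{25}{62}<\tfrac12$. This is exactly why the paper runs its interval argument with the families $I_t$ and $I'_t$ over \emph{all} odd $t$ up to about $\tfrac{p-1}{2}-3$: the smallest witness $t$ for a given $\beta=m/(2p)$ can be of order $p$, so no bounded set of test residues suffices. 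To repair the argument you would need to replace the six-residue test by a covering argument of this kind, and treat the small primes $p\leq 30$ separately, as the paper does.
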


\begin{proof}
Without loss of generality we may assume that $\gcd(a,j)=1$.

We start by proving that if a prime $\ell\geq 5$ divides $a$ then $v \in \Lambda$. For this it suffices to give a $t$, invertible modulo $2ap$ such that
\[\frd{\frac{t}{2}}+\frd{\frac{t}{p}}+\frd{\frac{tj}{2ap}}+\frd{\frac{(-2a-ap-j)t}{2ap}}=1.\]
Since the left hand side is an integer for any choice of $t$, and each summand is smaller than one it suffices to prove that
\[\frd{\frac{t}{2}}+\frd{\frac{t}{p}}+\frd{\frac{tj}{2ap}} \leq 1.\]

Consider the value 
\[t=1+ck\frac{2ap}{\ell},\]
with $c\equiv j^{-1}\bmod \ell$ and $k\in \Z$ such that $k\not\equiv (c\frac{2ap}{\ell})^{-1} \bmod \ell$ and $k$ in the interval 
\[\left(-\frac{\ell j}{2ap},-\frac{\ell j}{2ap}+\frac{\ell(p-2)}{2p}\right).\]
Note that we have to assume $p>7$ or $\ell\geq 5$ to ensure the existence of such a $k$.

Then $\fr{\frac{t}{2}}=\frac{1}{2}$ and 
\[ \frd{\frac{t}{p}} = \frd{\frac{1}{p}+ck \frac{2a}{\ell}} =\frac{1}{p}.\]
Moreover, we have that
\[\frd{\frac{tj}{2ap}}= \frd{\frac{(1+ck\frac{2ap}{\ell})j}{2ap}}=\frd{\frac{j}{2ap}+\frac{k}{\ell}}\leq \frac{(p-2)}{2p}.\]
From this it  follows that
\[\frd{\frac{t}{2}}+\frd{\frac{t}{p}}+\frd{\frac{tj}{2ap}}\leq 1\]
holds, which finishes this case.

Suppose now that the only primes dividing $a$ are $2$ or $3$.  If $p=11,13,17$ and $a=3$ then one can find easily by hand a $t$-value such that
\[\frd{\frac{t}{2}}+\frd{\frac{t}{p}}+\frd{\frac{tj}{2ap}}+\frd{\frac{(-2a-ap-j)t}{2ap}}=1\]
holds. For all other combinations $(a,j,p)$ with $a>1$ we give a  value for $t$ in Table~\ref{TabTVal} such that the above formula holds.

\begin{table}[hbtp]\label{TabTVal}
\begin{tabular}{|c|c|c|c|}
\hline
$a$  &  $\frac{j}{2ap} \in I $&   & $t$ \\
\hline
$4\mid a$& $(0,\frac{p-2}{2p})$ &&$1$\\
$p>3$ & $ (\frac{1}{2},\frac{p-1}{p})$& & $1+ap$\\
      & $ (0,\frac{p-4}{4p})\cup(\frac{3}{4},1)$& $j\equiv 1 \bmod 4$ & $1+\frac{ap}{2}$\\
      & $ (\frac{1}{4},\frac{3p-4}{4p})$ & $j\equiv 1 \bmod 4$ & $1+\frac{3ap}{2}$\\   
      & $(0,\frac{p-4}{4p})\cup(\frac{3}{4},1)$ & $j\equiv 3 \bmod 4$ & $1+\frac{3ap}{2}$\\
    & $(\frac{1}{4},\frac{3p-4}{4p})$ & $j\equiv 3 \bmod 4$ & $1+\frac{ap}{2}$\\
 \hline
$2\mid a$, $4\nmid a'$ &  $(0,\frac{p-2}{2p})$ &&$1$\\
$p>7$              & $  (\frac{1}{2},\frac{p-1}{p})$ && $1+ap$\\
                   & $(0,\frac18-\frac1p)\cup(\frac{3}{8},\frac{5}{8}-\frac{1}{p})\cup (\frac78,1)$ & $j\equiv 1 \bmod 4$ & $2+\frac{ap}{2}$\\
                   & $ (\frac{3}{8},\frac{5}{8}-\frac{1}{p})$&$j\equiv 3 \bmod 4$ & $2+\frac{3ap}{2}$\\

\hline
$9\mid a$& $ (0,\frac{p-2}{2p})$ &&$1$\\
$p>5$     &  $(\frac{1}{3},\frac{5}{6}-\frac{1}{p})$ & $j\equiv 2 \bmod 3$ & $1+\frac{2ap}{3}$\\
      &  $(0, \frac{1}{3}-\frac{1}{p})\cup (\frac{2}{3},1)$ &  $j\equiv 2 \bmod 3$ & $1+\frac{4ap}{3}$\\
      &  $(\frac{1}{3},\frac{5}{6}-\frac{1}{p})$ & $j\equiv 1 \bmod 3$ & $1+\frac{4ap}{3}$\\
      & $(0, \frac{1}{3}-\frac{1}{p})\cup (\frac{2}{3},1)$ &  $j\equiv 1 \bmod 3$ &$1+\frac{2ap}{3}$\\ 

\hline

$a=3$ & $  (0,\frac{p-2}{2p})$ &&$1$\\
$p\equiv 1\bmod 3$  &  $ (\frac{1}{3},\frac{5}{6}-\frac{1}{p})$ &$j\equiv 1 \bmod 3$   & $1+4p$ \\
$p>18$             &  $ (\frac{8}{9},\frac{19}{18}-\frac{1}{p})$&  $j\equiv 1 \bmod 3$ &$3+2p$ \\
                   &  $ (\frac{7}{9},\frac{17}{18}-\frac{1}{p})$ &$j\equiv 1 \bmod 3$ &$3+4p$ \\

                   &  $(\frac{2}{3},1)$ & $j\equiv 2 \bmod 3$   & $1+4p$ \\
                   &  $ (\frac{4}{9},\frac{11}{18}-\frac{1}{p})$ &  $j\equiv 2 \bmod 3$ &$3+2p$ \\
                   &  $ (\frac{5}{9},\frac{13}{18}-\frac{1}{p})$ & $j\equiv 2 \bmod 3$ &$3+4p$ \\
\hline
$a=3$ &           $  (0,\frac{p-2}{2p})$ &&$1$\\
$p\equiv 2\bmod 3$  &  $(\frac{2}{3},1)$ & $j\equiv 1 \bmod 3$   & $1+2p$ \\
$p>18$             &  $(\frac{5}{9},\frac{13}{18}-\frac{1}{p})$ &  $j\equiv 1 \bmod 3$ &$3+2p$ \\
                   &  $  (\frac{4}{9},\frac{11}{18}-\frac{1}{p})$ &  $j\equiv 1 \bmod 3$ &$3+4p$ \\

                   &  $ (\frac{1}{3},\frac{5}{6}-\frac{1}{p})$   & $j\equiv 2 \bmod 3$  &$1+2p$ \\
                   &  $ (\frac{7}{9},\frac{17}{18}-\frac{1}{p})$ &  $j\equiv 2 \bmod 3$ &$3+2p$ \\
                   &  $ (\frac{8}{9},\frac{19}{18}-\frac{1}{p})$ &  $j\equiv 2 \bmod 3$ &$3+4p$ \\
\hline
\end{tabular}
\caption{$t$-values for the case $a=2^{v_2}3^{v_3}$, $a\neq 1$}
\end{table}

The only case left to consider is the case $a=1$. If $p\leq 30$ then one can easily find an appropriate $t$-value by hand. Hence we
may assume that $p>30$. 
If we take $t=1$ then we see that $v\in\Lambda$ whenever 
\[\frac{j}{2ap}=\frac{j}{2p}\in \left(0,\frac{1}{2}-\frac{1}{p}\right).\]
We will consider what happens if $\frac{j}{2p}>\frac{p-2}{2p}$.

Suppose $t<p$ is an odd integer, and $k$ is an integer such that $k \leq \frac{tj}{2p}< k+1$. Then we have
\[ \frd{\frac{t}{2}}+\frd{\frac{t}{p }}+\frd{\frac{tj}{2p}} = \frac{1}{2}+\frac{t}{p}+\frac{tj}{2p}-k\]
The right hand side is at most $1$ if
\[ \frac{j}{2p} \leq \frac{1+2k}{2t}-\frac{1}{p}\]
Hence if
\[ \frac{j}{2p} \in \left( \frac{k}{t},\frac{1+2k}{2t}-\frac{1}{p} \right)\]
Then $\bv\in \Lambda$.

If we take $k=t-1$ then we get the interval
\[ I_t:=\left( 1-\frac{1}{t},1-\frac{1}{2t}-\frac{1}{p}\right)\]
and if we take $k=(t+1)/2$ then we get
\[ I'_t:=\left(\frac{1}{2}+\frac{1}{2t},\frac{1}{2}+\frac{1}{t}-\frac{1}{p}\right).\]
Note that $I_3=I'_3$. 

We claim that if $p>30$ and $5\leq t\leq  \frac{p-1}{2}-3$ then $I'_t\cap I'_{t-2}\neq \emptyset$ and $I_t\cap I_{t-2}\neq \emptyset$.
For this it suffices to check that 
\[ \frac{1}{2}+\frac{1}{2(t-2)} < \frac{1}{2}+\frac{1}{t}-\frac{1}{p} \mbox{ and }1-\frac{1}{2(t-2)}-\frac{1}{p} >1-\frac{1}{t} \]
Both conditions are equivalent with
\begin{equation}\label{eqnBoundPnt} 2t^2-(p+4)t+4p<0\end{equation}
The smallest value to check is $t=5$ then the above formula yields that $p>30$, which is actually the case. For fixed $p$ we have that the above bound is equivalent with $t\in (\frac{1}{4}p+1-\frac{1}{4} \sqrt{p^2-24p+16}, \frac{1}{4}p+1+\frac{1}{4} \sqrt{p^2-24p+16})$. 
The previous argument already shows that the left boundary of this interval is smaller than $5$. Substituting $t=\frac{p-1}{2}-3$ in (\ref{eqnBoundPnt}), yields that for $p>77/3$ the boundary point on the right is at bigger than $\frac{p-1}{2}-3$. In particular, if $p>30$, $t$ is odd then  $I'_t\cap I'_{t-2}\neq \emptyset$ and $I_t\cap I_{t-2}\neq \emptyset$.
Take now the union of $I'_t$ and $I_t$ for all odd $t$ between $3$ and $\frac{p-1}{2}-5$. This yields an interval $I=(\alpha,\beta)$ such that for all $\frac{j}{2p}\in I$ we have that $\bv \in \Lambda$. 
The maximal $t$-value is either  $\frac{p-1}{2}-3$ or $\frac{p-1}{2}-4$ (depending on $p\bmod 4$). Hence we know only that the maximal $t$ is at least $\frac{p-1}{2}-4$. From this it follows that
\[I\supset  \left( \frac{1}{2}+\frac{1}{p-9} , 1-\frac{1}{p}-\frac{1}{p-9}\right).\]
Note that $p-9>\frac{2}{3}p$ and hence $\frac{1}{p}+\frac{1}{p-9}\leq \frac{5}{2p}$. Hence the only possibilities for $\frac{j}{2p} \not \in I$ and $p-2<j <2p$ are
\[  \left\{\frac{p-1}{2p} ,\frac{p}{2p},\frac{p+1}{2p},\frac{p+2}{2p},  \frac{2p-4}{2p}. \frac{2p-3}{2p},\frac{2p-2}{2p},\frac{2p-1}{2p}\right\}.\]
If $\frac{j}{2p}\in \{\frac{p+1}{2p},\frac{2p-3}{2p}\}$ then we have that $\bv$ is in $\Lambda$. This can be verified by taking $t=p-2$. Hence we have shown that for all but six values for $\frac{j}{2p}$ the corresponding vector is in $\Lambda$.

It remains to show that for the remaining  values of $\frac{j}{2p}$ we have that $\bv \not\in \Lambda$.
If $\frac{j}{2p}\in \{\frac{1}{2},\frac{2p-2}{2p}\}$ then two  coordinates $\alpha,\beta$ of $\bv$ equal $\frac{1}{2}$. Hence for any admissible $t$ we have 
\[\frd{\frac{t}{2}}+\frd{\frac{t}{p}}+\frd{\frac{tj}{2ap}}+\frd{\frac{(-2a-ap-j)t}{2ap}}> \frac{1}{2}+\frac{1}{2} =1\]
Since the left hand side is an integer, it is at least 2.

In the other four cases we have two entries $\alpha,\beta$ such $\alpha=\beta+\frac{1}{2}$. Since $t$ is odd we have then that $|\fr{t\alpha}-\fr{t\beta}|=\frac{1}{2}$ and therefore
\[\frd{\frac{t}{2}}+\frd{\frac{t}{p}}+\frd{\frac{tj}{2p}}+\frd{\frac{(-2-p-j)t}{2p}}>\frd{\frac{1}{2}} +\frd{t\alpha}+\frd{t\beta}>1.\]
Summarizing  we have that for all $t$ that are invertible modulo $2p$ that 
\[\frd{\frac{t}{2}}+\frd{\frac{t}{p}}+\frd{\frac{tj}{2p}}+\frd{\frac{(-2-p-j)t}{2p}}\geq 2\]
holds. Using the symmetry of the coordinates it follows that  for all $t$ that are invertible modulo $2p$ we have
\[\frd{\frac{t}{2}}+\frd{\frac{t}{p}}+\frd{\frac{tj}{2p}}+\frd{\frac{(-2-p-j)t}{2p}}\leq 2\]
hence $\bv \not \in \Lambda$, which finishes the proof.
\end{proof}

\begin{remark}
If we had taking $p$ to be a non-prime the result would be different.
This would restrict the number of possible $t$'s that can be used.
As a consequence the Picard number will probably be slightly higher.

The cases for $p=7$, $p=5$ and $p=3$ can also be computed.
If $p=7$ and $3|a$ we  get $\rho(\tilde{S})=2+14(p-1)$.
For $p=5$ and $6|a$ we  get $\rho(\tilde{S})=2+18(p-1)$.
For $p=3$ and $60|a$ we get $\rho(\tilde{S})=2+30(p-1)$.
Also for small $p$ the result would be higher.
\end{remark}

\bibliographystyle{plain}
\bibliography{fastlist}

\end{document}